\pgfplotsset{width=10cm,compat=1.9}
\DeclareMathOperator{\SO}{SO}
\DeclareMathOperator{\Ker}{Ker}
\DeclareMathOperator{\Hom}{Hom}
\DeclareMathOperator{\Aut}{Aut}
\newcommand{\C}{\mathbb{C}}
\newcommand{\Z}{\mathbb{Z}}
\newcommand{\Q}{\mathbb{Q}}
\newcommand{\HH}{\mathbb{H}}
\newcommand{\gL}{\mathfrak{g}}
\newcommand{\cC}{{\mathcal C}}
\newcommand{\va}{\varphi}
\newcommand{\ra}{{\rightarrow}}
\newcommand{\hra}{\hookrightarrow}
\newcommand{\lra}{\longrightarrow}
\newcommand{\half}{{{\frac{1}{2}}}}
\newcommand{\sslash}{\mathbin{/\mkern-6mu/}}
\newcommand{\ssslash}{\mathbin{\big/\mkern-10mu\big/}}
\DeclareMathOperator{\B}{B}
\DeclareMathOperator{\haut}{hAut}
\DeclareMathOperator{\Map}{Map}
\DeclareMathOperator{\fw}{fw}
\DeclareMathOperator{\Der}{Der}
\DeclareMathOperator{\Diff}{Diff}
\DeclareMathOperator{\Kdim}{Kdim }
\DeclareMathOperator{\MC}{MC}
\numberwithin{equation}{section}
\newtheorem*{rep@theorem}{\rep@title}
\newcommand{\newreptheorem}[2]{%
\newenvironment{rep#1}[1]{%
 \def\rep@title{#2 \ref{##1}}%
 \begin{rep@theorem}}%
 {\end{rep@theorem}}}
\newtheorem{thm}{Theorem}[section]
\newtheorem{lem}[thm]{Lemma}
\newtheorem{prop}[thm]{Proposition}
\newtheorem{conj}[thm]{Conjecture}
\newtheorem{thmL}{Theorem}
\theoremstyle{definition}
\newtheorem{defn}[thm]{Definition}
\newtheorem{rem}[thm]{Remark}
\theoremstyle{remark}
\renewenvironment{proof}[1][\proofname] {\par\pushQED{\qed}\normalfont\topsep6\p@\@plus6\p@\relax\trivlist\item[\hskip\labelsep\bfseries#1\@addpunct{.}]\ignorespaces}{\popQED\endtrivlist\@endpefalse}
\begin{document}
\title{\Large Tautological rings of fake quaternionic spaces}
\author{\large Nils Prigge}
\date{}

\maketitle

\begin{abstract}
	\noindent
	\textbf{Abstract.} 
	The tautological ring $R^*(M)$ of a smooth manifold $M$ is the ring of characteristic classes generated by the Miller-Morita-Mumford classes, and is often more accessible than the ring of all characteristic classes of smooth $M$-bundles.
	 
	In this paper, we show that the Krull dimension of the tautological ring vanishes for almost all manifolds homotopy equivalent to $\HH P^2$ through a combination of new methods in rational homotopy theory developed in \cite{Ber17, Ber20} and the family signature theorem. 
\end{abstract}

\section{Introduction}

Let $M^d$ be a closed, oriented, smooth manifold and let $\pi:E\ra B$ be a proper submersion of oriented manifolds with fibre $M$. We denote by $T_{\pi}E:=\Ker(D\pi:TE\ra TB)$ the vertical tangent bundle which is an oriented vector bundle of rank $d$. For any class $c\in H^{|c|}(\B\SO(d))$ we can define a characteristic class of $\pi$ by the following fibre integral
\begin{equation}\label{MMM}
\kappa_c(\pi):=\int_{\pi}c(T_{\pi}E)\in H^{|c|-d}(B),
\end{equation}
which is called a \emph{generalized Miller-Morita-Mumford class} or \emph{tautological class} \cite{GGRW17} and the subring of $H^*(B)$ generated by all MMM-classes is called the \emph{tautological ring} $R^*(\pi)$.\footnote{The terminology originates from the special case of surface bundles where it has been coined by Mumford \cite{Mum83} and is a classical object of study.} In particular, we can study the tautological ring of the universal oriented $M$-bundle over the classifying space $\B\Diff^+(M)$, denoted by $R^*(M)\subset H^*(\B\Diff^+(M))$, which provides information for tautological rings of arbitrary such bundles. 

\medskip
Results in the literature about ring theoretic properties of $R^*(M)$ (for high-dimensional manifolds) generally use one of the following three methods:
\begin{itemize}
	\item[(i)] One can produce relations in the tautological ring that only use the underlying fibration and don't depend on the bundle structure, for example by analysing the Serre spectral sequence with some constraints on the cohomology ring $H^*(M)$ \cite{Gri17,GGRW17}. A very streamlined account has been given by Randal-Williams, who showed (amongst other things) that $R^*(M)$ is finitely generated as a ring if $H^*(M)$ is concentrated in even degrees \cite[Thm A]{RW16}.
	\item[(ii)] A family version of Hirzebruch's signature theorem implies that $\kappa_{L_i}$ vanishes if $d$ is odd and $\kappa_{L_i}$ is nilpotent if $d$ is even, where $L_i\in H^{4i}(\B\SO(d))$ denotes the Hirzebruch L-classes (we recall a precise statement in Section \ref{signature} below).
	\item[(iii)] Given a torus action $\mathbb{T}\curvearrowright M$, one can efficiently compute the tautological classes of the associated $M$-bundle $\pi:M\sslash\mathbb{T}\ra *\sslash\mathbb{T}=\B \mathbb{T}$. In particular, one can potentially deduce that certain MMM-classes are non-trivial and more generally obtain lower bounds on the Krull dimension of $ R^*(M)$.
\end{itemize}

Recently, Alexander Berglund has described in \cite{Ber17,Ber20} a program how to do step (i) systematically using rational homotopy theory. In this paper, we use his results combined with the relations obtained by the family signature theorem to study the tautological ring of manifolds $M$ that are homotopy equivalent to $\HH P^2$. Such manifolds can be classified using surgery theory, and in fact there are infinitely many manifolds $M\simeq \HH P^2$ \cite{Hsi66, EKu62} which are classified by the Pontrjagin number $p_1^2(M)$ \cite{KrSt07}. 

Our main result (cf.\,Theorem \ref{Main}) shows that the Krull dimension of $R^*(M)$ almost always vanishes. This demonstrates that the Krull dimension of the tautological ring is almost a homotopy invariant for fake quaternionic spaces and suggests that some ring theoretic properties of the tautological ring seem to depend only mildly on the manifold structure which is quite surprising.

\begin{thmL}\label{MainTheoremIntro}
	For almost all manifolds $M\simeq \HH P^2$ we have $\Kdim R^*(M)=0$.
\end{thmL}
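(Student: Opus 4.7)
The plan is to combine Berglund's rational model for $\B\Diff^+(M)$ with the family signature theorem, and to observe that the resulting presentation of $R^*(M)$ has vanishing Krull dimension except for finitely many values of the diffeomorphism invariant $p_1^2(M)$. Since $H^*(\HH P^2;\Q)$ is concentrated in even degrees, \cite[Thm A]{RW16} first implies that $R^*(M)$ is finitely generated as a graded $\Q$-algebra in nonnegative degrees, and for such an algebra $\Kdim = 0$ is equivalent to each positive-degree element being nilpotent, i.e.\ to $R^*(M)$ being finite-dimensional over $\Q$. The candidate generators are then the classes $\kappa_c$ for $c$ running through a basis of $H^*(\B\SO(8);\Q) = \Q[p_1,p_2,p_3,e]$.

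Next, I would apply the framework of \cite{Ber17, Ber20} to obtain an explicit rational model of the relevant component of $\B\Diff^+(M)$ together with its vertical tangent bundle. Since $M$ is simply connected and rationally elliptic with minimal Sullivan model $(\Lambda(x,y),\,dy=x^3)$, $|x|=4,|y|=11$, the associated derivation dg Lie algebra is small and explicit, and the surgery-theoretic comparison between block diffeomorphisms and diffeomorphisms of $\HH P^2$-type manifolds is tractable. The smooth structure of $M$, parametrised by $p_1^2(M)$, enters the model through the tangential datum, so Berglund's procedure yields a finitely presented $\Q$-algebra bounding $R^*(M)$ whose defining relations depend polynomially on $p_1^2(M)$.

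I would then superimpose the relations produced by the family signature theorem. Because the intersection form on $H^4(M)$ has rank one with trivial monodromy, the Meyer family signature class reduces to the constant $\sigma(M)=1$, yielding $\kappa_{L_2}=1$ and $\kappa_{L_k}=0$ for all $k\geq 3$. Expanding $L_k$ in Pontryagin classes, these relations recursively eliminate $\kappa_{p_k}$ for $k\geq 3$ modulo products of lower classes and thereby collapse the generating set. Combining the relations from both sources in the quotient ring should then show that its Krull dimension is zero whenever $p_1^2(M)$ avoids a finite exceptional set, consisting of the common zeros of certain polynomials in $p_1^2(M)$ that appear as coefficients of the combined relations.

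The main obstacle I anticipate is the second stage: making Berglund's model explicit enough to extract the presentation required, and tracking the dependence on $p_1^2(M)$ carefully through each of its ingredients (the $\haut$-model, the passage to block diffeomorphisms via surgery, and the tangential refinement). Once this is in place, the family signature input and the final Krull-dimension check reduce to a concrete finite calculation in a small polynomial ring.
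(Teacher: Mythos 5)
Your overall skeleton (Berglund's rational models, the family signature theorem, and tracking the dependence on the first Pontrjagin class) matches the paper's, but the step where all the content lives --- ``combining the relations from both sources \emph{should then show} that the Krull dimension is zero outside a finite exceptional set'' --- is asserted rather than argued, and it does not follow formally from the shape of the relations. The signature relations $\kappa_{L_k}=0$ for $k\geq 3$ do not ``recursively eliminate'' generators: they are finitely many polynomial relations in the eight-generator polynomial ring $\Q[a_8,a_{12},p_{1,0},p_{2,0},p_{2,1},p_{3,0},p_{3,1},p_{3,2}]$ that Berglund's model produces, and whether the quotient has Krull dimension $0$ is a genuine commutative-algebra question whose answer really does vary with $p_1$ (the paper computes dimension $3$ at $p_1=2$ but $0$ at $p_1=3$ and $p_1=4$). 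Two ingredients are missing from your outline. First, you need a reason why the exceptional values of $p_1$ are cut out by polynomial conditions at all; the paper gets this by forming a \emph{parametrized} Hirzebruch ideal over $\Q[P_1]$ and invoking Grothendieck's generic freeness in the form of \cite[Thm 14.8b]{Eis95}, which says that the locus $\{P:\Kdim(\text{fibre over }P)\geq 1\}$ equals $V(J_1)$ for some ideal $J_1\subset\Q[P_1]$. Second, and crucially, ``the common zeros of certain polynomials'' is a finite set only if those polynomials are not all zero, i.e.\ only if $J_1\neq 0$; this is established by exhibiting at least one value of $p_1$ for which the fibre has Krull dimension $0$, which the paper does by explicit Macaulay2 computation. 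Without that anchor your argument is equally consistent with the exceptional set being all of $\Q$.

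A secondary point: you propose to run Berglund's machinery on $\B\Diff^+(M)$ itself via a surgery-theoretic comparison with block diffeomorphisms. The paper avoids this entirely: it works with the homotopical tautological ring $R^*_h(M)\subset H^*(\B\haut(TM)^{e=e^{\fw}})$, which surjects onto $R^*(M)$, so an upper bound on the Krull dimension of the quotient of $R^*_h(M)$ by the Hirzebruch ideal already bounds $\Kdim R^*(M)$. (One also needs the finiteness of $\mathcal{E}(M)$ and of $\Aut(H_M,\lambda)$ to reduce to the identity component and to upgrade ``$\kappa_{L_i}$ nilpotent'' to ``$\kappa_{L_i}=0$''.) Your detour through surgery is unnecessary for an upper bound and would be considerably harder to carry out.
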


Berglund's approach is based on studying fibrations with extra structure such as a vector bundle on the total space whose restriction to a fibre is equivalent to a fixed vector bundle. We recall a suitable version of such fibrations in Section \ref{RationalReview} and discuss rational models of the corresponding classifying spaces for fake quaternionic spaces. We prove the main theorem in Section \ref{ParametrizedHirzebruchSection}. The new idea in this paper is to consider the tautological ring as parametrized by the value of $p_1(M)$ and then use a version of Grothendieck's generic freeness theorem to obtain information how the dimensions can vary. In Section \ref{endsection} we discuss possible exceptions to Theorem \ref{MainTheoremIntro} and highlight an interesting connection to the $\hat{A}$-genus.

\bigskip
\noindent
\textbf{Acknowledgements.}
This paper is extracted from the author's thesis and I would like to express my gratitude to Oscar Randal-Williams for his supervision. I also want to thank Alexander Berglund for many fruitful discussions and sharing of his work which is an important basis of this paper. This research was supported by the Knut and Alice Wallenberg foundation through grant no.\,2019.0519.

\section{Preliminaries}

\subsection{Algebraic conventions}

We recall some standard constructions and terminology of differential graded algebra and notation introduced in Berglund's papers \cite{Ber17,Ber20}. We use cohomological grading conventions for differential graded commutative algebras (cdga) and homological gradings for differential graded Lie algebras. We list a few standard constructions to fix our conventions.
\begin{itemize}
 \item[(i)] To any cdga $A$ we can associate a differential graded Lie algebra $(\Der(A),[d_A,-])$, where, due to our degree convention, a derivation $\theta\in \Der(A)$ of degree $n$ is a linear map $\theta:A^*\ra A^{*-n}$ that satisfies $\theta(a\cdot a')=\theta(a)\cdot a'+(-1)^{n\cdot |a|}a\cdot \theta(a')$ for all $a,a'\in A$.
 \item[(ii)] Given a cdga $A$ of finite type and a dg Lie algebra $L$, the completed tensor product $A\hat{\otimes} L$, which in degree $n$ is given by $\prod_iA^i\otimes L_{i+n}$, is a dg Lie algebra with the usual differential of tensor products and bracket given by 
 \[[a\otimes l,a'\otimes l]=(-1)^{|a'|\cdot |l|} a\cdot a'\otimes [l,l'].\]
 \item[(iii)] Given a dg Lie algebra $L$ and $n\in \Z_{\geq 0}$, we denote by $L\langle n\rangle\subset L$ the dg Lie subalgebra determined by 
 \begin{equation*}
  L\langle n\rangle_i=\begin{cases}
                       L_i & i>n\\
                       \ker(d_L:L_i\ra L_{i-1}) & i=n\\
                       0 & i<n
                      \end{cases}
 \end{equation*}
 In keeping with commonly used notation, we denote $\Der(\Lambda)\langle 1\rangle$ by $\Der^+(\Lambda)$.
\item[(iv)] A Maurer-Cartan element $\tau$ in a dg Lie algebra $L$ is an element that satisfies the Maurer-Cartan equation $d_L(\tau)+\half[\tau,\tau]=0$. The set of Maurer-Cartan elements is denoted by $\text{MC}(L)$ and for $\tau\in \MC(L)$ the twisted dg Lie algebra $L^{\tau}$ is the same as $L$ as a graded Lie algebra but with twisted differential $d_{L^{\tau}}=d_L+[\tau,-]$.
 \end{itemize}

We follow \cite{Ber20} in our convention for the \emph{Chevalley-Eilenberg complex} of a dg Lie algebra (which in turn is based on \cite{Ta83}), which is defined as the dg coalgebra \[\cC^{CE}(L)=(\Lambda sL,D=d_1+d_2),\] where the differentials are determined by their corestrictions
\begin{align*}
 d_1(sl)&=-sd_L(l), & d_2(sl_1\wedge sl_2)=(-1)^{|l_1|}s[l_1,l_2].
\end{align*}
If a dg Lie algebra acts on a cdga $A$ through derivations, then the Chevalley-Eilenberg cochain complex is
\[\cC_{CE}(L;A):=\big(\Hom(\cC^{CE}(L),A),\partial+t\big),\]
where 
\begin{equation}\label{CEconvention}
 \begin{split}
   \partial(f)&=d_A\circ f-(-1)^{|f|}f\circ D\\
 t(f)(sl_1\wedge \hdots \wedge sl_n)&=\sum_{i=1}^n(-1)^{|sl_i|(|f|+|sl_1|+\hdots+|sl_{i-1}|)}l_i\cdot f(sl_1\wedge \hdots \hat{sl_i} \hdots \wedge sl_n),
 \end{split}
\end{equation}
and $\cC_{CE}(L;A)$ is a cdga via the convolution product. An element $f\in \cC^{CE}(L;A)$ is an \emph{$n$-cochain} if $f(sl_1\wedge\hdots\wedge sl_k)=0$ unless $k=n$ and we identify   $0$-cochains with $A$.

\subsection{Orientations and fibre integration}
Throughout the text we consider rational (co)homology groups unless stated otherwise. We assume that all manifolds are oriented throughout and that all maps preserve the orientation and we denote the set of orientation preserving diffeomorphisms (resp.\,homotopy self-equivalences) by $\Diff^+(M)$ (resp.\,$\haut^+(M)$) and the path component of the identity by $\Diff_0(M)$ (resp.\,$\haut_0(M)$). 

\smallskip
A fibration $\pi:E\ra B$ whose fibre is a Poincar\'e duality space $X$ of formal dimension $d$ is called orientable if the local coefficient system $\mathcal{H}^d(X)$ on $B$ is trivial, and oriented if we choose an isomorphism $\mathcal{E}_X:\mathcal{H}^d(X;\Z)\overset{\cong}{\lra}\Z$. Given an orientation $\mathcal{E}_X$, one can define fibre integration via the Serre spectral sequence as the composition
\begin{equation}
 \int_{\pi}:H^*(E)\twoheadrightarrow E^{*-d,d}_{\infty}\subset E^{*-d,d}_2\cong H^{*-d}(B;\mathcal{H}^d(X))\xrightarrow{(\mathcal{E}_X)_*} H^{*-d}(B),
\end{equation}
where $H^*(E)$ projects onto $E_{\infty}^{*-d,d}$ as $H^*(X)$ vanishes for $*>d$. Occasionally, we also denote fibre integration by $\pi_!:H^*(E)\ra H^{*-d}(B)$. From the above definition, one can see that fibre integration is a $H^*(B)$-module homomorphism (see also \cite[Prop.\,8.2]{BH58}), i.e.\,for $x\in H^*(B)$ and $y\in H^*(E)$ we have 
\begin{equation}\label{pushpull}
 \int_{\pi}\pi^*(x)\cdot y=x \int_{\pi}y.
\end{equation}

\section{\texorpdfstring{$TM$}{TM}-fibrations}\label{RationalReview}

Denote by $TM$ the tangent bundle of a closed, oriented manifold $M$. Then a $TM$-fibration is a pair $(\pi:E\ra B,T_{\pi}E\ra E)$ consisting of an oriented fibration $\pi:E\ra B$ with fibre $M$ and an oriented vector bundle $T_{\pi}E\ra E$ such that $T_{\pi}E|_{\pi^{-1}(b)}$ is equivalent as a vector bundle to $TM$ for all $b\in B$ (this is a special case of \cite[Def.\,2.1]{Ber20}). We say that two $TM$-fibrations are equivalent if there exists a fibre homotopy equivalence of the underlying fibrations that is covered by a bundle map.

\smallskip
This is precisely the data extracted out of a fibre bundle that we need to define tautological classes (and also the motivation for studying $TM$-fibrations) and we can extend the definition of tautological classes in \eqref{MMM} to $TM$-fibrations. The set of $TM$-fibrations over $B$ up to equivalence is in bijection with $[B,\B\haut^+(TM)]$ by pullback of a univeral $TM$-fibration, where
\begin{equation*}
 \haut^+(TM)=\left \{ \begin{tikzcd}[ampersand replacement=\&,row sep=small,column sep=small] TM \arrow{r}{\bar{f}}\arrow[swap]{r}{\cong} \arrow{d} \& TM\arrow{d}\\
                     M\arrow{r}{f}\arrow[swap]{r}{\simeq}\& M
                    \end{tikzcd}\,\Bigg | \,f\in \haut^+(M),\, \bar{f} \text{ is linear}
\right\}
\end{equation*}
is the monoid of tangential homotopy self-equivalences. Later-on, we focus on the set of path-components $\haut_0(TM)\subset \haut(TM)$ of tangential homotopy equivalences that cover maps in $\haut_0(M)$. The space $\B\haut_0(TM)$ classifies $TM$-fibrations where the underlying fibration has trivial fibre transport.

\smallskip
In the following, we study the subring of $H^*(\B\haut^+(TM))$ generated by MMM-classes. However, we first need to address a subtlety related to the Euler class $e\in H^d(\B\SO(d))$ for even dimensions $d=2n$.

\subsection{The Euler class of \texorpdfstring{$TM$}{TM}-fibrations}
 For even dimensions $d=2n$ the cohomology ring $H^*(\B\SO(d);\Q)$ is the polynomial ring $\Q[p_1,\hdots,p_{n-1},e]$ on the Pontrjagin classes and the Euler class. There are two different ways to associate an Euler class to a $TM$-fibration $(\pi:E\ra B,T_{\pi}E\ra E)$. One can either consider the Euler class of the oriented vector bundle $e(T_{\pi}E)\in H^d(E)$ or the fibrewise Euler class $e^{\fw}(\pi)\in H^d(E)$ introduced in \cite[Def.\,3.1.1]{HLLRW17} that just depends on the underlying fibration $\pi:E\ra B$. For fibre bundles, these two constructions agree but they are different for $TM$-fibrations in general \cite[Prop.\,5.1.5]{Pri20}. Hence, we either have to decide which Euler class we use in the definintion of the $\kappa$-classes, or we can study the classifying space of $TM$-fibrations that satisfy $e(T_{\pi}E)=e^{\fw}(\pi)$. We choose the later option in this paper.
 
 \smallskip
 The classifying space of oriented $M$-fibrations $\pi:E\ra B$ with a given cohomology class $e\in H^d(E)$ that restricts to $e(M)\in H^d(M)$ has been described by May \cite[Sect.\,11]{May75} via the two-sided bar construction and is given by $\B(\Map(M,K(\Z,d))_{e(M)},\haut^+(M),*)$. There are classifying maps 
 \begin{align*}
  \B\haut^+(M)&\xrightarrow{\,e^{\fw}(\pi)\,} \B(\Map(M,K(\Z,d))_{e(M)},\haut^+(M),*),\\
  \B\haut^+(TM)&\xrightarrow{\,e(T_{\pi}E)\,} \B(\Map(M,K(\Z,d))_{e(M)},\haut^+(M),*),
 \end{align*}
 which record the underlying $M$-fibration together with fibrewise Euler class and respectively the Euler class of the vector bundle $T_{\pi}E$.
 \begin{defn}\label{e=efw}
  We define $\B\haut(TM)^{e=e^{\fw}}$ as the homotopy equalizer of 
  \begin{equation*}
  \begin{tikzcd}[ampersand replacement=\&]
     \B\haut^+(M)\times \B\haut^+(TM) \arrow[shift left=1]{r}{e^{\fw}(\pi)\circ \pi_1}\arrow[shift right=1,swap]{r}{e(T_{\pi}E)\circ \pi_2} \& \B(\Map(M,K(\Z,d))_{e(M)},\haut^+(M),*).
  \end{tikzcd}
 \end{equation*}
 \end{defn}
 Then $\B\haut(TM)^{e=e^{\fw}}$ is the classifying space of $TM$-fibrations that satisfy $e^{\fw}(\pi)=e(T_{\pi}E)$. Denote by $(E\overset{\pi}{\ra} \B\haut(TM)^{e=e^{\fw}},T_{\pi}E\ra E)$ the universal $TM$-fibration over $\B\haut(TM)^{e=e^{\fw}}$. We can now define define $\kappa$-classes unambiguously as before, i.e.\,for $c\in H^{|c|}(\B\SO(d))$ we define 
 \begin{equation}
  \kappa_c:=\int_{\pi}c(T_{\pi}E)\in H^{|c|-d}(\B\haut(TM)^{e=e^{\fw}}).
 \end{equation}
 
\begin{defn}
The \emph{homotopical tautological ring} $R^*_h(M)$ is the subring of $ H^*(\B\haut(TM)^{e=e^{\fw}})$ generated by all $\kappa$-classes. 
\end{defn}

For smooth fibre bundles, the definition of the MMM-classes of a smooth $M$-bundle only depends on the underlying $TM$-fibration. Moreover, since the two Euler classes agree there is a map $\B\Diff^+(M)\ra \B\haut(TM)^{e=e^{\fw}}$ which induces a surjection 
\begin{equation}\label{ComparisonTautological}
R_h^*(M)\twoheadrightarrow R^*(M)
\end{equation}
as observed in \cite[Thm 2.10]{Ber20}.
\begin{rem}
  Berglund introduces in \cite{Ber20} a more general version of the homotopical tautological ring for a bundle $\xi$ over $M$ that he denotes by $R^*(\xi)\subset H^*(\B\haut(\xi))$. We have chosen a different notation than $R^*(TM)$ as the definitions are slightly different.
\end{rem}   

 In this sense, the homotopical tautological ring provides an upper bound on $R^*(M)$. The key point is that $R_h^*(M)$ is more amenable to tools from homotopy theory and studying it provides a systematic way to get information about tautological rings as in (i) in the introduction.
 
\bigskip
In the next sections, we use relations from the family signature theorem and tools from rational homotopy theory to study the (homotopical) tautological ring. In general, this only provides information about the image of $R^*(M)$ in $H^*(\B\Diff_0(M))$, which we denote by $R^*_0(M)$, and the image of $R_h(M)$ in $H^*(\B\haut_0(TM)^{e=e^{\fw}})$, which we denote by $R_{h,0}(M)$, and where $\B\haut_0(TM)^{e=e^{\fw}}$ is the classifying space of $TM$-fibrations satisfying $e^{\fw}(\pi)=e(T_{\pi}E)$ and where the underlying fibration has trivial fibre transport. A model of $\B\haut_0(TM)^{e=e^{\fw}}$ is defined analogously as in Definition \ref{e=efw} as the homotopy equalizer of 
   \begin{equation}\label{HEmodel}
  \begin{tikzcd}[ampersand replacement=\&]
      \B\haut_0(M)\times \B\haut_0(TM) \arrow[shift left=1]{r}{e^{\fw}(\pi)\circ \pi_1}\arrow[shift right=1,swap]{r}{e(T_{\pi}E)\circ \pi_2} \& \B(\Map(M,K(\Z,d))_{e(M)},\haut_0(M),*).
  \end{tikzcd}
 \end{equation}
 We can upgrade the computation of $R^*_{h,0}(M)$ if the group $\mathcal{E}(M)$ of homotopy classes of homotopy self-equivalences is finite.
  \begin{lem}\label{Efinite}
   If $\mathcal{E}(M)$ is finite, the inclusion $ \B\haut_0(TM)^{e=e^{\fw}}\ra \B\haut^+(TM)^{e=e^{\fw}} $ induces an isomorphism of homotopical tautological rings $ R^*_{h}(M)\cong R^*_{h,0}(M)$.
  \end{lem}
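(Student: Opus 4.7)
The plan is to identify the map $\iota\colon \B\haut_0(TM)^{e=e^{\fw}} \to \B\haut^+(TM)^{e=e^{\fw}}$ as the pullback of a finite covering and then use a Serre-spectral-sequence/transfer argument to conclude. First, the map $R^*_h(M) \to R^*_{h,0}(M)$ is surjective by construction: both rings are generated by $\kappa$-classes, and the universal $\kappa$-classes on the source restrict to the corresponding restricted $\kappa$-classes on the target. So it remains to establish injectivity.

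By the definition of $\haut_0(TM)$ as the union of those path components of $\haut^+(TM)$ covering maps in $\haut_0(M)$, one has $\pi_0(\haut_0(TM)) = \ker\bigl(\pi_0(\haut^+(TM))\to \pi_0(\haut^+(M))\bigr)$. Consequently
\[
\Gamma \;:=\; \pi_0(\haut^+(TM))/\pi_0(\haut_0(TM))
\]
embeds as a subgroup of $\pi_0(\haut^+(M)) \subset \mathcal{E}(M)$, and so $\Gamma$ is finite by hypothesis. The inclusion of components of $\haut^+(TM)$ then provides a homotopy fibre sequence
\[
\B\haut_0(TM) \lra \B\haut^+(TM) \lra \B\Gamma.
\]

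Applying the same analysis to the homotopy equalizer \eqref{HEmodel} defining the Euler-class-constrained classifying spaces, and using that both $e^{\fw}(\pi)$ and $e(T_\pi E)$ are manifestly natural under precomposition by homotopy self-equivalences of $M$, the $\Gamma$-action commutes with the equalizer and we obtain a homotopy fibre sequence
\[
\B\haut_0(TM)^{e=e^{\fw}} \lra \B\haut^+(TM)^{e=e^{\fw}} \lra \B\Gamma
\]
with the same finite base. Since $\Gamma$ is finite, $H^{>0}(\B\Gamma;\Q)=0$, so the rational Serre spectral sequence collapses at $E_2$ to give
\[
H^*(\B\haut^+(TM)^{e=e^{\fw}};\Q) \;\cong\; H^*(\B\haut_0(TM)^{e=e^{\fw}};\Q)^{\Gamma}.
\]
In particular, $\iota^*$ is injective on rational cohomology, hence remains injective on the subring $R^*_h(M)$. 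Combined with surjectivity above, this yields the isomorphism.

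The main technical obstacle I anticipate is verifying that the $\Gamma$-monodromy really is compatible with the Euler-class equalizer, so that the same fibre sequence over $\B\Gamma$ is obtained for the constrained spaces as for the unconstrained ones; this amounts to checking naturality of $e^{\fw}(\pi)$ and $e(T_\pi E)$ with respect to self-equivalences and tracking this through the two-sided bar construction of Definition \ref{e=efw}.
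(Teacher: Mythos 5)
Your argument is correct and rests on the same core fact as the paper's proof: the comparison map has a finite, discrete homotopy fibre, identified with $\Gamma\cong\pi_0(\haut(M)_{TM})$, the group of components of self-equivalences preserving $TM$. The paper reaches this more directly, using the bar-construction descriptions $\B\haut^{+}(TM)\simeq\B(\Map(M,\B\SO(d))_{TM},\haut(M)_{TM},*)$ and $\B\haut_0(TM)\simeq\B(\Map(M,\B\SO(d))_{TM},\haut_0(M),*)$ from \cite[Cor.\,2.4]{Ber17} and the fact that homotopy fibres commute with the homotopy equalizer; it then concludes by the transfer, since any finite-sheeted covering is injective on rational cohomology. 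Your route additionally promotes the covering to a fibre sequence over $\B\Gamma$ and runs the Serre spectral sequence. That is more than is needed (regularity of the covering plays no role in the injectivity), and it is exactly where your acknowledged technical obstacle sits: the deck group acting on the first factor $\B\haut^{+}(M)$ and on the target $\B(\Map(M,K(\Z,d))_{e(M)},\haut^{+}(M),*)$ of the equalizer diagram is $\pi_0(\haut^{+}(M))$, not $\Gamma$, so the phrase ``the $\Gamma$-action commutes with the equalizer'' conceals the actual computation, namely that the homotopy equalizer of the homotopy fibres $\pi_0(\haut^{+}(M))\times\Gamma\rightrightarrows\pi_0(\haut^{+}(M))$ (identity on the first factor versus inclusion of $\Gamma$ via the second) is a single copy of $\Gamma$. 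Once you have done that computation you already have the paper's argument, so the $\B\Gamma$ packaging can be dropped. A final small point: surjectivity of $R^*_h(M)\to R^*_{h,0}(M)$ requires no proof, as $R^*_{h,0}(M)$ is defined as the image of $R_h^*(M)$.
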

 \begin{proof}
  By \cite[Cor.\,2.4]{Ber17} there are equivalences
  \begin{equation}\label{Bhaut(TM)}
  \begin{split}
   \B\haut^+(TM)&\simeq \B\left(\Map(M,\B\SO(d))_{TM},\haut(M)_{TM},*\right)\\
   \B\haut_0(TM)&\simeq \B\left(\Map(M,\B\SO(d))_{TM},\haut_0(M),*\right),
   \end{split}
  \end{equation}
  where $\haut(M)_{TM}\subset \haut^+(M)$ denotes path components of homotopy equivalences that preserve the tanget bundle. It follows that the homotopy fibre of
  \begin{equation}\label{inclusion}
   \B\haut_0(TM)^{e=e^{\fw}}\ra \B\haut^+(TM)^{e=e^{\fw}}
  \end{equation}
 is equivalent to $\pi_0(\haut(TM)_{TM})$ and hence is homotopy finite if $\mathcal{E}(M)=\pi_0(\haut(M))$ is finite. Therefore, the comparison map \eqref{inclusion} has the homotopy type of a finite sheeted covering and thus induces an injection on rational cohomology that restricts to an isomorphism on homotopical tautological rings.
 \end{proof}

\subsection{Rational models and fibre integration}

Berglund has determined in \cite{Ber17} a rational model for $\B\haut_0(TM)$ and cocycle representatives of the Euler and Pontrjagin clases in terms of these models in \cite{Ber20}. Based on his models, we determine a model for $\B\haut_0(TM)^{e=e^{\fw}}$. In combination with the work of the author in \cite{Pri19} on the fibrewise Euler class, one can then simply compute $R^*_{h,0}(M)\subset H^*(\B\haut_0(TM))^{e=e^{\fw}}$.

\medskip
We begin by giving an alternative description of $\B\haut_0(TM)^{e=e^{\fw}}$ up to rational equivalence.
\begin{lem}\label{Bhaute=efw}
 Let $M^d$ be an even dimensional, closed, oriented manifold. There is a rational equivalence
 \[\B\haut_0(TM)^{e=e^{\fw}}\xrightarrow{\,\simeq_{\Q}\,} \left(\prod_{i=1}^{d/2-1}\Map(M,K(\Q,4i))_{p_i(M)}\right)\ssslash \haut_0(M),\]
 where $\Map(M,K(\Q,4i))_{p_i(M)}$ denotes the path component the rational Pontrjagin class $p_i(M)\in H^{4i}(M;\Q)$ and $\haut_0(M)$ acts by precomposition.
\end{lem}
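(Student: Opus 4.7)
The plan is to recognise the homotopy equaliser in Definition \ref{e=efw} as a homotopy pullback and then compute it using Berglund's model \eqref{Bhaut(TM)} combined with the rational splitting of $\B\SO(d)$. Concretely, the homotopy equaliser of the parallel pair $(e^{\fw}(\pi)\circ\pi_1,\,e(T_{\pi}E)\circ\pi_2)$ is equivalent to the homotopy pullback
\[\B\haut_0(TM)^{e=e^{\fw}} \simeq \B\haut_0(M) \times^h_Y \B\haut_0(TM),\]
where $Y = \B(\Map(M,K(\Z,d))_{e(M)},\haut_0(M),*)$. Berglund's equivalence \eqref{Bhaut(TM)} further identifies $\B\haut_0(TM)$ with $\Map(M,\B\SO(d))_{TM}\ssslash\haut_0(M)$.

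Since $d$ is even, $\Ho^*(\B\SO(d);\Q)=\Q[p_1,\ldots,p_{d/2-1},e]$ is a polynomial algebra, so $\B\SO(d)$ is rationally formal and rationally equivalent to $\prod_{i=1}^{d/2-1}K(\Q,4i)\times K(\Q,d)$ via its generators. This splitting is natural in the source, hence $\haut_0(M)$-equivariant after applying $\Map(M,-)$, and induces a rational splitting
\[\Map(M,\B\SO(d))_{TM} \simeq_{\Q} P \times E,\]
with $P = \prod_{i=1}^{d/2-1}\Map(M,K(\Q,4i))_{p_i(M)}$ and $E = \Map(M,K(\Q,d))_{e(M)}$, under which $e(T_{\pi}E)$ corresponds to the projection onto $E$. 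Similarly, $Y \simeq_{\Q} E\ssslash\haut_0(M)$.

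It remains to compute the rational homotopy pullback. Working fibrewise over $\B\haut_0(M)$, the fibres of $\B\haut_0(M)$, $\B\haut_0(TM)$, and $Y$ are $*$, $P\times E$, and $E$, respectively, and the map on fibres induced by $e(T_{\pi}E)$ is the second projection $P\times E\to E$, a trivial $P$-fibration. Hence the homotopy pullback of the fibres is $P$, independent of where $e^{\fw}(\pi)|_{*}:*\to E$ lands. Since the $\haut_0(M)$-action on this homotopy fibre is the precomposition action, the total space is
\[\B\haut_0(TM)^{e=e^{\fw}} \simeq_{\Q} P\ssslash\haut_0(M),\]
as claimed.

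The main technical point is verifying that $e(T_{\pi}E)$ really corresponds to the projection onto the $E$-factor in a $\haut_0(M)$-equivariant way. This is routine from the naturality of the universal Pontrjagin and Euler classes under the action of $\haut_0(M)$ by precomposition on $M$.
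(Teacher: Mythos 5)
Your proposal is correct and follows essentially the same route as the paper: both identify the homotopy equalizer with the homotopy pullback over $\B(\Map(M,K(\Z,d))_{e(M)},\haut_0(M),*)$, use Berglund's identification $\B\haut_0(TM)\simeq\Map(M,\B\SO(d))_{TM}\sslash\haut_0(M)$ together with the rational splitting $\Map(M,\B\SO(d))_{TM}\simeq_{\Q}M_P\times M_E$ under which the Euler-class map becomes the projection onto $M_E$, and conclude that the pullback is $M_P\sslash\haut_0(M)$. The only cosmetic difference is that the paper exhibits the answer directly as the corner of a homotopy-pullback square and then compares the integral and rational equalizer diagrams, whereas you compute the pullback fibrewise over $\B\haut_0(M)$; both hinge on the same equivariance and naturality point you flag at the end.
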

\begin{proof}
 Consider the spaces $M_P=\prod_{i=1}^{d/2-1}\Map(M,K(\Q,4i))_{p_i(M)}$ and $M_E=\Map(M,K(\Q,d))_{e(M)}$ that record the path components of the rational Pontrjagin classes and the rational Euler class respectively. The homotopy quotient $M_P\sslash \haut_0(M)$ is the classifying space of $M$-fibrations with trivial fibre transport and rational cohomology classes on the total space that restrict to the Pontrjagin classes of $M$ on the fibre (and similarly for $M_E$). Consider the square
\begin{equation}
\begin{tikzcd}[ampersand replacement=\&]
M_P\sslash\haut_0(M) \arrow{r}\arrow{d} \& \B\haut_0(M)\arrow{d}{e^{\fw}}\\
(M_P\times M_E)\sslash \haut_0(M)\arrow{r}{\pi_{M_E}} \& M_E\sslash \haut_0(M)
\end{tikzcd}
\end{equation}
where the horizontal maps are induced from projections and the vertical maps are given by the classifying map of the fibration together with the rational fibrewise Euler class. Since the square commutes up to homotopy and the horizontal homotopy fibres agree, it follows that $M_P\sslash \haut_0(M)$ is the homotopy equalizer of 
   \begin{equation}\label{QHEmodel}
  \begin{tikzcd}[ampersand replacement=\&]
      \left((M_P\times M_E)\sslash \haut_0(M)\right) \times \B\haut_0(M) \arrow[shift left=1]{r}{e^{\fw}(\pi)\circ \pi_2}\arrow[shift right=1,swap]{r}{\pi_{M_E}\circ \pi_1} \& M_E\sslash \haut_0(M).
  \end{tikzcd}
 \end{equation}
 Since $\Map(M,\B\SO(d))_{TM}\simeq_{\Q} M_P\times M_E$ and $\Map(M,K(\Z,d))_{e(M)}\simeq_{\Q}M_E$, there are rational equivalences
 \begin{align*}
 \B\haut_0(TM)\overset{\eqref{Bhaut(TM)}}{\simeq} \Map(M,\B\SO(d)_{TM}\sslash \haut_0(TM) &\overset{\simeq_{\Q}}{\lra}(M_P\times M_E)\sslash \haut_0(M),\\
 \Map(M,K(\Z,d))_{e(M)}\sslash \haut_0(M)&\overset{\simeq_{\Q}}{\lra} M_E\sslash \haut_0(M),
 \end{align*}
 and we obtain a map of diagrams from \eqref{HEmodel} to \eqref{QHEmodel} which is a rational homotopy equivalence on the underlying spaces. Therefore, the induced map on homotopy equalizers 
 \[ \B\haut_0(TM)^{e=e^{\fw}}\xrightarrow{\,\simeq_{\Q}\,}M_P\sslash \haut_0(M).
 \]
 is a rational homotopy equivalence.
\end{proof}
We can now apply Berglund's results \cite{Ber17,Ber20} in combination with \cite{Pri19} to obtain models for $\B\haut_0(TM)^{e=e^{\fw}}$ as well as representatives of the characteristic classes of $T_{\pi}E\ra E$. 
In the following, we consider manifolds $M$ homotopy equivalent to $\HH P^2$ and we denote a minimal Sullivan model of $M$ by $\Lambda=(\Lambda(x,y),d=x^3\frac{\partial}{\partial y})$ with $|x|=4$ and $|y|=11$ (where $x\in \Lambda $ represents a generator of the integral cohomology ring).
\begin{prop}\label{modelE=Efw}
	Let $M$ be a manifold homotopy equivalent to $\HH P^2$ with cocycle representatives of the Pontrjagin classes given by $p_i(\Lambda)=p_ix^i\in \Lambda$ for $p_i\in \Q$ for $i\in \{1,2\}$. The cohomology ring $H^*(\B\haut_0(TM)^{e=e^{\fw}})$ is a polynomial ring \[B:=\Q[a_8,a_{12},p_{1,0},p_{2,0},p_{2,1},p_{3,0},p_{3,1},p_{3,2}]\]
 	on generators of degree $8,12,4,8,4,12,8,4$, and the $TM$-fibration $E\ra \B\haut_0(TM)^{e=e^{\fw}}$ is formal with algebraic model given by
	\[B\lra E:=B[x]/(x^3+a_8x+a_{12}),
	\]
	and representatives of the characteristic classes of $T_{\pi}E\ra E$
	\begin{equation}\label{RepClasses}
	\begin{aligned}
	e(T_{\pi}E)&=3x^2+a_8, \\ p_1(T_{\pi}E)&=p_1x+p_{1,0},
	\end{aligned}
	\qquad
	\begin{aligned}  p_2(T_{\pi}E)&=p_2x^2+p_{2,1}x+p_{2,0},\\
	 p_3(T_{\pi}E)&=p_{3,2}x^2+p_{3,1}x+p_{3,0}.
	\end{aligned}
	\end{equation}
\end{prop}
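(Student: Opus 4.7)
The plan is to combine Lemma \ref{Bhaute=efw} with the rational homotopy machinery of Berglund \cite{Ber17, Ber20}. By the lemma, it suffices to compute the rational cohomology of $M_P \ssslash \haut_0(M)$ for $M_P := \prod_{i=1}^3 \Map(M, K(\Q, 4i))_{p_i(M)}$; I treat the two factors separately before combining them.

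Each factor $\Map(M, K(\Q, 4i))_{p_i(M)}$ is a rational generalised Eilenberg-MacLane space with $\pi_k \otimes \Q = H^{4i-k}(M;\Q)$ for $k > 0$. Since $M \simeq \HH P^2$ has rational cohomology $\Q[x]/(x^3)$ concentrated in even degrees $0, 4, 8$, the rational cohomology of $M_P$ is a polynomial ring on exactly the six generators $\{p_{i,j}\}_{0 \leq j < i \leq 3}$, with $p_{i,j}$ of degree $4(i-j)$, matching the claim. The rational homotopy of $\haut_0(M)$ is encoded in $H_*(\Der^+(\Lambda))$ for $\Lambda = (\Lambda(x,y), dy = x^3)$. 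A direct enumeration shows $\Der^+(\Lambda)$ is four-dimensional, spanned by $x^2 \partial/\partial y,\, \partial/\partial x,\, x\partial/\partial y,\, \partial/\partial y$ in degrees $3, 4, 7, 11$; the differential $[d_\Lambda, -]$ sends $\partial/\partial x$ to $-3\, x^2 \partial/\partial y$ and annihilates the remaining generators, so $H_*(\Der^+(\Lambda))$ is $\Q$ in degrees $7$ and $11$ and zero otherwise. Hence $H^*(\B\haut_0(M); \Q) = \Q[a_8, a_{12}]$ is polynomial on two generators of degrees $8$ and $12$.

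To combine these, I apply Berglund's description of $M_P \ssslash \haut_0(M)$ as a Chevalley-Eilenberg complex $\cC_{CE}(\Der^+(\Lambda); \Lambda_P)$, where $\Lambda_P$ is $\Lambda$ enlarged by free polynomial generators $P_i$ of degree $4i$ realising the universal Pontrjagin classes, with a Maurer-Cartan twist forcing $P_i|_M = p_i x^i$. Because $\Der^+(\Lambda)$ is small and its Lie cohomology only contributes in two specific degrees, a direct check shows that all higher CE differentials vanish and the CE cohomology is exactly the polynomial tensor product $B$. The universal total space $E$ then has model $\Lambda \otimes B$, which after eliminating $y$ via the induced twisted differential -- forcing $dy$ to be the most general element of $B[x]$ of total degree $12$ lifting $x^3$, namely $x^3 + a_8 x + a_{12}$ -- becomes the formal model $E = B[x]/(x^3 + a_8 x + a_{12})$. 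The Pontrjagin cocycles $p_i(T_\pi E)$ are the unfolded forms of the universal classes on $M_P$ along a basis of $H^{<4i}(M;\Q)$, and the fibrewise Euler class is $\partial_x(x^3 + a_8 x + a_{12}) = 3x^2 + a_8$ by \cite{Pri19}; the defining constraint $e = e^{\fw}$ then identifies this with $e(T_\pi E)$.

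The main technical obstacle is the collapse claim in the last step. Verifying it requires checking that the two surviving cohomology classes of $\Der^+(\Lambda)$, represented by the derivations $\partial/\partial y$ and $x \partial/\partial y$, act trivially on the polynomial generators at the level of the CE cochain complex. This reduces to a finite degree-by-degree calculation that the CE differentials involving these classes land in exact parts of $\Lambda_P$; the bookkeeping -- tracking Koszul signs, shifts, and the precise Maurer-Cartan twist encoding the Pontrjagin classes of the fibre -- is the bulk of the technical work.
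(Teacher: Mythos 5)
Your overall route is the same as the paper's: reduce to $M_P\ssslash\haut_0(M)$ via Lemma \ref{Bhaute=efw}, feed the minimal model of $M$ into Berglund's derivation--Lie-algebra machinery, and quote \cite{Pri19} for the fibrewise Euler class; your computations of $H^*(M_P)$ and of $H_*(\Der^+(\Lambda))$ agree with the paper. But the step you yourself flag as ``the bulk of the technical work'' --- the collapse of the Chevalley--Eilenberg differentials so that the cohomology is the free polynomial ring $B$ --- is precisely the content of the proposition, and you do not carry it out. The paper does not establish this by a degree-by-degree check on $\cC_{CE}$ of the full model $\gL_M=(\Der^+(\Lambda)\ltimes\Lambda\hat{\otimes}\Pi)^{\tau(M)}\langle 0\rangle$; instead it builds a chain of quasi-isomorphisms of dg Lie algebras
\[
\mathfrak{a}_M=\mathfrak{a}\times(H(\Lambda)\otimes\Pi)\langle 0\rangle\;\hookrightarrow\;(\mathfrak{a}\ltimes\Lambda\hat{\otimes}\Pi)^{\tau(M)}\langle 0\rangle\;\hookrightarrow\;\gL_M,
\qquad \mathfrak{a}=\Q\Big\{x\tfrac{\partial}{\partial y},\,\tfrac{\partial}{\partial y}\Big\},
\]
and since $\mathfrak{a}_M$ is abelian with zero differential its CE complex is the polynomial ring $B$ with zero differential, so no collapse argument is needed at all. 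The observations that make this reduction work --- that $\Pi$ is abelian so the twist by $\tau(M)$ contributes nothing to the bracket, and that the cocycle-choice map $H(\Lambda)\otimes\Pi\to\Lambda\hat{\otimes}\Pi$ is a quasi-isomorphism of $\mathfrak{a}$-modules for the \emph{trivial} action on the source --- are exactly what your deferred ``finite bookkeeping'' would have to amount to, and without them your proof is incomplete at its central point.

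A second, concrete error: your derivation of $D(y)=x^3+a_8x+a_{12}$ as ``the most general element of $B[x]$ of total degree $12$ lifting $x^3$'' is not an argument, and is false as stated --- such a lift could a priori be $x^3+p_{1,0}x^2+(\lambda a_8+\mu p_{2,0}+\dots)x+\dots$ since $B^4$ and $B^8$ contain many classes. The actual coefficients come from the explicit CE differential \eqref{CEconvention}: the term $t(y)$ pairs the generators $a_8$ and $a_{12}$, dual to $x\frac{\partial}{\partial y}$ and $\frac{\partial}{\partial y}$, with the values of those derivations on $y$, namely $x$ and $1$, while the generators $p_{i,j}$ are dual to elements of $H(\Lambda)\otimes\Pi$ acting trivially on $\Lambda$. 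Only this computation, not a generality principle, forces $D(y)=x^3+a_8x+a_{12}$.
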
 
 \begin{proof}
Denote by $\Pi$ be the graded vector space $\Q\{q_1,q_2,q_3\}$ where $|q_i|=4i-1$, which is a dg Lie algebra with trivial differential and bracket. Then $\tau(M):=\sum_{i=1}^{3}p_i(\Lambda)\otimes q_i \in \MC(\Lambda\hat{\otimes}\Pi)$ and the twisted dg Lie algebras $(\Lambda\hat{\otimes}\Pi)^{\tau(M)}\langle 0 \rangle$ is a dg Lie model for $M_P=\prod_{i=1}^3\Map(M,K(\Q,4i)_{p_i(M)}$ by \cite{Be15}.

Since $\Der^+(\Lambda)$ acts on $\Lambda\hat{\otimes}\Pi$ through derivations, we can consider the semi-direct product 
 	\begin{equation}
 	\begin{split}
 	\gL_M:=(\Der^+(\Lambda)\ltimes \Lambda\hat{\otimes}\Pi_d)^{\tau(M)}\langle 0\rangle
 	\end{split}
 	\end{equation}
 	which is a model for $M_P\sslash \haut_0(M)$ for by \cite[Thm 3.8]{Ber20} and hence also for $\B\haut_0(TM)^{e=e^{\fw}}$ by Lemma \ref{Bhaute=efw}.
 	As a chain complex, $\Der^+(\Lambda)$ has a basis given by $\{x^2\frac{\partial}{\partial y},\;\frac{\partial}{\partial x},\;x\frac{\partial}{\partial y},\;\frac{\partial}{\partial y}\}$,
 	where the only non-trivial differential is given by $[d,\frac{\partial}{\partial x}]=-3x^2\frac{\partial}{\partial y}$. Hence the inclusion $\mathfrak{a}\hra \Der^+(\Lambda)$ of the abelian Lie sub algebra $\mathfrak{a}:=\Q\{x \frac{\partial}{\partial y},\,\frac{\partial}{\partial y}\}$ is a quasi-isomorphism, and we obtain a smaller Lie model for $\gL_M$ by
 	\begin{align*}
 	\begin{split}
 	(\mathfrak{a}\ltimes \Lambda\hat{\otimes}\Pi)^{\tau(M)}\langle 0\rangle&\overset{\simeq}{\hra} \gL_M
 	\end{split}
 	\end{align*}
 	We can simplify further by observing that the linear map $H(\Lambda)\ra \Lambda$ which assigns cocycle representatives induces a quasi-isomorphism of dg Lie algebras $H(\Lambda)\otimes \Pi\overset{\simeq}{\ra} \Lambda\hat{\otimes}\Pi$ because $\Pi$ is abelian. Moreover, this quasi-isomorphism is a map of $\mathfrak{a}$-modules with respect to the trivial $\mathfrak{a}$-module structure on $H(\Lambda)\hat{\otimes} \Pi$, and therefore there are quasi-isomorphisms of dg Lie algebras
 	\begin{align*}
 	\begin{split}
 	\mathfrak{a}_M&:=\mathfrak{a}\times (H(\Lambda)\otimes \Pi)\langle 0\rangle \overset{\simeq}{\hra} (\mathfrak{a}\ltimes \Lambda\hat{\otimes}\Pi)^{\tau(M)}\langle 0\rangle \overset{\simeq}{\hra} \gL_M,
 	\end{split}
 	\end{align*}
 	Since $\mathfrak{a}_M$ is abelian with trivial differential, its Chevalley-Eilenberg complex is  $\cC_{CE}(\mathfrak{a}_M)\cong \Q[a_8,a_{12},p_{1,0},p_{2,0},p_{2,1},p_{3,0},p_{3,1},p_{3,2}]$ with trivial differential where the generators correspond to the (suspended) dual basis elements of the following basis of $\mathfrak{a}_M$:
 	\begin{align*}
 	&\big\{x \frac{\partial}{\partial y},\,\frac{\partial}{\partial y}, 1\otimes q_1,\,1\otimes q_2,\,x\otimes q_2,\,1\otimes q_3,\,x\otimes q_3,\,x^2\otimes q_3\big\}.
 	\end{align*}
    
    It follows from Theorem 3.8 in \cite{Ber20} that a relative Sullivan model for the universal $TM$-fibration $E\ra \B\haut_0(TM)^{e=e^{\fw}}$ is given by $\eta_*:\cC_{CE}(\mathfrak{a}_M;\Q)\ra \cC_{CE}(\mathfrak{a}_M;\Lambda)$, where $\mathfrak{a}_M$ acts on $\Lambda$ through the projection onto $\mathfrak{a}\subset \Der^+(\Lambda)$ and where $\eta:\Q\ra\Lambda$ denotes the unit. With the sign convention for the Chevalley-Eilenberg complex in \eqref{CEconvention}, we find that 
    $\cC_{CE}(\mathfrak{a}_M;\Lambda)\cong (\cC_{CE}(\mathfrak{a}_M)\otimes \Lambda (x,y),D(y)=x^3+a_8x+a_{12})$, and hence that $\eta_*$ is equivalent to 
    \[ 
     \cC_{CE}(\mathfrak{a}_M)\lra \cC_{CE}(\mathfrak{a}_M)[x]/(x^3+a_8x+a_{12})
    \]
     via the evident projection. A straight forward computation shows that the cocycle representatives in $\cC_{CE}(\mathfrak{g}_M;\Lambda)$ of the Pontrjagin classes that Berglund determines \cite[Thm 3.8]{Ber20} pull back to 
	\begin{align*}
	 p_1(T_{\pi}E)&=p_1x+p_{1,0}, & p_2(T_{\pi}E)&=p_2x^2+p_{2,1}x+p_{2,0},\\
	  p_3(T_{\pi}E)&=p_{3,2}x^2+p_{3,1}x+p_{3,0}, & & 
	\end{align*}
	under the inclusion $\mathfrak{a}_M\overset{\simeq}{\hra}\gL_M$. Lastly, the fibrewise Euler class of the underlying fibration is represented by $e^{\fw}(\pi)=3x^2+a_8$ by \cite[Thm 5.6]{Pri19} and agrees by construction with $e(T_{\pi}E)$, which concludes the proof.
 \end{proof}
\begin{rem}
 There is a good reason why the rational model for the universal $TM$-fibration with $M\simeq \HH P^2$ above is so simple. Namely, $\HH P^2$ is positively rationally elliptic, i.e.\,the collection of all rational homotopy and cohomology groups is finite dimensional and $\chi(M)>0$. Rational models for such spaces are very rigid \cite[Ch.\,32]{FHT}. Moreover, Halperin's conjecture states that for any fibration with trivial fibre transport and positively rationally elliptic fibre the Serre spectral sequence collapses. Halperin's conjecture is easy to check for particular positively rationally elliptic spaces and consequently, fibrations with rationally elliptic fibres are quite easy to study.
 
 This is in stark contrast to fibrations with rationally hyperbolic fibres $X$, for which the model of $\B\haut_0(X)$ in terms of derivations of a Sullivan model are quite unwieldy. Instead, rational models of fibrations with rationally hyperbolic fibres are more accessible through dg Lie models and turn out to be much more complicated to analyse (see for example \cite{BM14,Sto22}).
\end{rem}

\subsection{Fibre integration}
 Let $E\ra B$ be an $M$-fibration that satisfies the assumption of the Leray-Hirsch theorem, i.e.\,the restriction to the fibre $H^*(E)\ra H^*(M)$ is surjective. This implies that $H^*(E)$ is a free module over the cohomology ring of the base and in this case, fibre integration is completely determined by the push-pull formular \eqref{pushpull}.
 
 \smallskip
 It follows from Proposition \ref{modelE=Efw} that $\{1,x,x^2\}$ is a free $H_{CE}(\mathfrak{a}_M)$-module basis of $H_{CE}(\mathfrak{a}_M;\Lambda)$. Consequently, fibre integration is determined by 
\begin{equation}
\pi_!(x^i)=\begin{cases}
1 & i=2\\
0 & i<2
\end{cases}.
\end{equation}
Fibre integrals of higher powers of $x$ can be computed by rewriting in terms of the module basis. For example, since $x^4+a_8x^2+a_{12}x=0\in H_{CE}(\mathfrak{a}_M;\Lambda)$ we find that $\pi_!(x^4)=-a_8$, and one can easily work out representatives of the $\kappa$-classes.

\section{Interaction with the family signature theorem}\label{signature}

Hirzebruch's signature theorem \cite{Hir66} describes an intricate link between the tangent bundle and the topology of a manifold $M^{4k}$. It states that there are polynomials $L_k\in H^{4k}(\B\SO;\Q)$ such that for closed oriented manifold $M^{4k}$ the signature of the intersection form is given by $\text{sign}(M)=\int_M L_k(TM)$. The first few of these polynomials are given by
\begin{align*}
L_1&=\frac{1}{3}p_1,\\
L_2&=\frac{1}{45}(7p_2-p_1^2).
\end{align*}
There is a fibrewise version of this theorem originally due to Atiyah \cite{At69} for bundles $\pi:E\ra B$ with fibre $M$ that relates $\kappa_{L_i}=\int_{\pi}L_i(T_{\pi} E)$ with certain invariants of fibrations associated to the signature of the local system of $\mathcal{H}^{d/2}(M;\Q)$ if $d$ is even. More precisely, there are classes $\sigma_{4i-d}\in H^{4i-d}(\B\Aut(H_M,\lambda);\Q)$,  where $\Aut(H_M,\lambda)$ denotes the group of automorphisms of $H^{d/2}(M;\Q)$ which preserve the intersection pairing $\lambda$, whose pullback along the map $\phi:\B\haut^+(M)\ra \B\Aut(H_M,\lambda)$ which classifies the local system $\mathcal{H}^{d/2}(M;\Q)$ associated to the bundle $\pi:E\ra B$ agree with $\kappa_{L_i}$.
\begin{thm}[{\cite[Thm 2.6]{RW22}}]\label{FamilySignatureTheorem}
	Let $\pi:E\ra B$ be a smooth, oriented fibre bundle with fibre $M^d$ a closed, oriented manifold. Then
	\begin{equation}
	\kappa_{L_i}=\int_{\pi}L_i(T_{\pi}E)=\begin{cases}
	\phi^*(\sigma_{4i-d}) & \text{if d is even}\\
	0 & \text{if d is odd}
	\end{cases}
	\end{equation}
\end{thm}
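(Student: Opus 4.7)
My plan is to derive the formula from the family Atiyah--Singer index theorem applied to the vertical signature operator, in the spirit of Atiyah and Meyer. For $d$ even, I would first choose a fibrewise Riemannian metric on $T_\pi E$ and assemble the vertical signature operator $D^+_\pi$, an elliptic family along the fibres of $\pi$. Its analytic index $\mathrm{ind}(D^+_\pi)\in K^0(B)$ is computed by the family Atiyah--Singer theorem as
\[
\mathrm{ch}\bigl(\mathrm{ind}(D^+_\pi)\bigr) \;=\; \int_\pi L(T_\pi E) \;\in\; H^*(B;\mathbb{Q}),
\]
whose degree-$(4i-d)$ component is precisely $\kappa_{L_i}$. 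On the other hand, fibrewise Hodge theory identifies $\mathrm{ind}(D^+_\pi)$ with the virtual bundle $H^+-H^-\in K^0(B)$, where $H^\pm$ are the $\pm 1$ eigenbundles of the involution on the local system $\mathcal{H}^{d/2}(M;\mathbb{Q})$ induced by the intersection pairing $\lambda$. Since this virtual bundle depends only on the data $(H_M,\lambda)$, it is pulled back along $\phi:\B\haut^+(M)\to\B\Aut(H_M,\lambda)$ from a universal virtual bundle $\xi$ on the target; defining $\sigma_{4i-d}$ as the degree-$(4i-d)$ component of $\mathrm{ch}(\xi)$ then yields $\kappa_{L_i}=\phi^*\sigma_{4i-d}$.

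For $d$ odd, there is no middle-dimensional intersection form and we must show $\kappa_{L_i}$ vanishes outright. My plan is a bordism/stabilization argument: consider the bundle $\pi\sqcup\bar\pi:E\sqcup\bar E\to B$ whose fibre $M\sqcup(-M)$ bounds $M\times[0,1]$, extend to a fibre bundle of manifolds-with-boundary over $B$ with fibre $M\times[0,1]$, and exploit Poincar\'e--Lefschetz duality for the $L$-class to force a relation among fibre integrals that collapses rationally to $\kappa_{L_i}(\pi)=0$. An alternative route is to invoke the odd family index theorem directly and verify that no primary Chern character classes can arise from the vertical signature operator in odd relative dimension.

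The principal obstacle is making the Hodge-theoretic identification of $\mathrm{ind}(D^+_\pi)$ with the algebraic virtual bundle $H^+-H^-$ fully natural over the classifying space $\B\haut^+(M)$, so that the classes $\sigma_k$ genuinely live on $\B\Aut(H_M,\lambda)$ and do not depend on a choice of smooth structure or Riemannian metric. A cleaner route is likely to bypass analysis altogether by invoking Ranicki's algebraic theory of symmetric Poincar\'e chain complexes, which produces the signature classes intrinsically from the Poincar\'e duality structure of the fibration, and thereby extends the statement to topological or Poincar\'e duality fibrations as well. The odd case is secondary but requires its own careful bordism book-keeping that is easy to gloss over.
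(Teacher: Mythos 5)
This theorem is not proved in the paper: it is quoted verbatim from Randal-Williams \cite[Thm 2.6]{RW22}, so there is no internal proof to compare against, and your proposal must be judged on its own merits. For $d$ even, your outline is essentially Atiyah's original argument (the paper itself attributes the fibrewise signature theorem to \cite{At69}), and it is sound in outline. Two details deserve flagging. First, the family index theorem gives $\mathrm{ch}\bigl(\mathrm{ind}(D^+_\pi)\bigr)=\int_\pi \mathcal{L}(T_\pi E)$ for the Atiyah--Singer normalisation of the $L$-class, whose degree-$4i$ component is $2^{d/2-2i}L_i$ rather than $L_i$; so $\kappa_{L_i}=2^{2i-d/2}\,\mathrm{ch}_{4i-d}(H^+-H^-)$, and your definition of $\sigma_{4i-d}$ must absorb this constant. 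Second, the involution defining $H^\pm$ is not ``induced by $\lambda$'' (a pairing is not an involution); one needs a fibrewise metric on the flat bundle $\mathcal{H}^{d/2}(M;\Q)$ to split it into definite subbundles, and one must argue this is a contractible choice so that $H^+-H^-$ descends to a well-defined class on $\B\Aut(H_M,\lambda)$. These are exactly the points you flag as the ``principal obstacle,'' and they are genuinely where the work lies, but they are standard.

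The odd-dimensional case, however, contains a real gap. The doubling argument is vacuous: since Pontrjagin classes are insensitive to orientation while fibre integration changes sign under orientation reversal, one has $\kappa_{L_i}(\bar\pi)=-\kappa_{L_i}(\pi)$ automatically, so the relation extracted from the bounding bundle $E\times[0,1]\ra B$ is $\kappa_{L_i}(\pi)+\kappa_{L_i}(\bar\pi)=0$, i.e.\ $0=0$, and no Poincar\'e--Lefschetz bookkeeping will extract more from it. The fallback of ``verifying that no primary Chern character classes arise'' from the odd family index is not an argument. The route that actually works --- and the one taken in \cite{RW22}, which is why the theorem there holds for topological block bundles, not just smooth bundles --- is the one you mention only in passing: Ranicki's symmetric $L$-theory, where $\kappa_{L_i}$ is identified with a class pulled back from the $L$-theory spectrum of $\Z$, whose homotopy in odd degrees is torsion, forcing rational vanishing. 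If you want to complete your proposal, you should promote that remark from an aside to the actual proof of the odd case (and, arguably, of the even case as well).
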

\begin{rem}
 Note that the statement above requires rational coefficients. Aside from more refined statements of the family signature theorem, Randal-Williams has also shown in \cite{RW22} that Theorem \ref{FamilySignatureTheorem} holds more generally for topological block bundles, so the family signature theorem depends on the bundle but not the smooth structure.
\end{rem}

This theorem has first been exploited to study tautological rings in \cite[Thm 2.1]{GGRW17}, where they observe that since the automorphism group $\Aut(H_M,\lambda)$ is arithmetic, its classifying space has finite rational cohomological dimension so that $\kappa_{L_i}=\phi^*(\sigma_{4i-d})$ must be nilpotent.  

\smallskip
In our case, we restrict our attention to the path-component of the identity $\Diff_0(M)$ (resp.\,$\haut_0(M)$) so that $\phi:\Diff_0(M)\subset \haut_0(M)\ra \Aut(H_M,\lambda)$ is constant and hence $\kappa_{L_i}=0\in R_0(M)$. The family signature theorem does not hold, however, for $TM$-fibrations in general (as we will see below). Thus we can improve our upper bound $R_{h,0}^*(M)$ by enforcing these additional relations in the homotopical tautological ring. 
\begin{defn}
	Let $i^*L_k\in H^{4k}(\B\SO(d))$ denote the restriction of the L-classes and define $\kappa_{L_k}=\int_{\pi}i^*L_k(T_{\pi}E)\in H^{4k-d}(\B\haut_0(TM)^{e=e^{\fw}})$. The \emph{Hirzebruch ideal} $I_H\subset H^{*}(\B\haut_0(TM)^{e=e^{\fw}};\Q)$ is the ideal generated by all $\kappa_{L_k} $ for $4k>\dim\,M$. For computational reasons, we also consider the ideal $I_H^{\leq n}\subset I_H$ generated by $\kappa_{L_k}$ for $\dim M/4<k\leq n$. 
\end{defn}

By the family signature theorem, the surjection \eqref{ComparisonTautological} factors as
	\begin{equation}\label{surjection}
	R^*_{h,0}(M)\lra R^*_{h,0}(M)/(I_H^{\leq k}\cap R^*_{h,0}(M))\lra R^*_{h,0}(M)/(I_H\cap R^*_{h,0}(M)) \lra R^*_0(M), 
	\end{equation}
which is a better upper bound than $R^*_{h,0}(M)$. In particular, we obtain an upper bound on the Krull dimension
\begin{equation}\label{UpperBound}
\Kdim R_{0}^*(M)\leq \Kdim  R^*_{h,0}(M)/(I_H\cap R^*_{h,0}(M)).
\end{equation}
In practice, it turns out to be much simpler to compute the dimension of the Hirzebruch ideal $I_H\subset H^*(\B\haut_0(TM)^{e=e^{\fw}};\Q)$ which also provides an upper bound on the Krull dimension in some cases of interest.

\begin{lem}\label{dimI}
 If $M$ is even dimensional and $R_{h,0}^*(M)$ and $H^{\text{even}}(\B\haut_0(M)^{e=e^{\fw}})$ are finitely generated as $\Q$-algebras, then $\Kdim R_0^*(M)\leq \dim I^{\leq k}_H $ for all $k$.
\end{lem}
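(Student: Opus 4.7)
The plan is to chain the inequality \eqref{UpperBound} coming from the family signature theorem with the monotonicity of Krull dimension under injections of finitely generated commutative $\Q$-algebras. Throughout I write $H^{\mathrm{ev}}:=H^{\mathrm{even}}(\B\haut_0(TM)^{e=e^{\fw}})$ for the finitely generated $\Q$-algebra appearing in the hypothesis, and I interpret $\dim I_H^{\leq k}$ as $\Kdim H^{\mathrm{ev}}/I_H^{\leq k}$.

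First I will upgrade \eqref{UpperBound} from $I_H$ to $I_H^{\leq k}$ and then embed everything into $H^{\mathrm{ev}}/I_H^{\leq k}$. Since $I_H^{\leq k}\subseteq I_H$, the tautological surjection
\[
R_{h,0}^*(M)/(I_H^{\leq k}\cap R_{h,0}^*(M))\twoheadrightarrow R_{h,0}^*(M)/(I_H\cap R_{h,0}^*(M))
\]
combined with \eqref{UpperBound} yields $\Kdim R_0^*(M)\leq \Kdim R_{h,0}^*(M)/(I_H^{\leq k}\cap R_{h,0}^*(M))$. Parity then plays a decisive role: since $d$ is even and every Pontrjagin/Euler generator of $H^*(\B\SO(d))$ has even degree, each $\kappa_c$ lives in an even cohomological degree, so $R_{h,0}^*(M)\subseteq H^{\mathrm{ev}}$ and the generators $\kappa_{L_j}$ of $I_H^{\leq k}$ lie in $H^{\mathrm{ev}}$ as well. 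The inclusion therefore induces a ring injection
\[
R_{h,0}^*(M)/(I_H^{\leq k}\cap R_{h,0}^*(M))\hookrightarrow H^{\mathrm{ev}}/I_H^{\leq k}.
\]

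The remaining step is the commutative algebra fact that for an injection $A\hookrightarrow B$ of finitely generated commutative $\Q$-algebras one has $\Kdim A\leq \Kdim B$: via Noether normalisation one picks algebraically independent $z_1,\dots,z_n\in A$ with $n=\Kdim A$, notices they stay algebraically independent in $B$, and deduces from dominance of the induced morphism $\mathrm{Spec}\,B\to\mathbb{A}^n_{\Q}$ (plus the finite generation of $B$, so $\mathrm{Spec}\,B$ has finitely many irreducible components) that some component of $\mathrm{Spec}\,B$ has dimension $\geq n$. Applying this to the displayed injection, whose source is finitely generated by hypothesis on $R_{h,0}^*(M)$ and whose target is finitely generated because $H^{\mathrm{ev}}$ is, yields $\Kdim R_0^*(M)\leq \dim I_H^{\leq k}$. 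The main subtlety is precisely this last monotonicity step: it is indispensable that both algebras be finitely generated, and restricting to the even-degree subring is what lets one side-step the odd-degree nilpotents in the graded-commutative algebra $H^*$ that would otherwise obstruct a clean comparison.
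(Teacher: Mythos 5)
Your proof is correct and follows essentially the same route as the paper: chain the bound \eqref{UpperBound} (via the factorisation \eqref{surjection}) with the monotonicity of Krull dimension under inclusions of finitely generated $\Q$-algebras applied to $R^*_{h,0}(M)/(I_H^{\leq k}\cap R^*_{h,0}(M))\subset H^{\text{even}}/I_H^{\leq k}$. The only difference is presentational: you spell out the monotonicity step via Noether normalisation and make the even-degree observation explicit, whereas the paper simply cites the equality of transcendence degree and Krull dimension for finitely generated algebras.
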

\begin{proof}
For finitely generated $k$-algebras the transcendence degree coincides with the Krull dimension \cite[Thm 5.9]{Kem11}. In particular, if $A\subset B$ are finitely generated $k$-algebras then $\Kdim A\leq \Kdim B$. Since both $ R^*_{h,0}(M)/(I^{\leq k}_H\cap R^*_{h,0}(M))\subset H^{\text{even}}(\B\haut_0(M)^{e=e^{\fw}})/I_H^{\leq k} $ are finitely generated by assumption, we obtain bounds on the Krull dimensions 
\[ \Kdim R_0^*(M)\overset{\eqref{UpperBound}}{\leq} \Kdim  R^*_{h,0}(M)/(I_H^{\leq k}\cap R^*_{h,0}(M))\leq \Kdim H^{\text{even}}(\B\haut_0(M)^{e=e^{\fw}})/I_H^{\leq k}. \]
\end{proof}
One can compute $\dim I_H^{\leq k}$ for a fixed value of the first Pontrjagin class using Macaulay2 \cite{M2}; in the table below we state a few sample computations for $k=10$:
\begin{equation}\label{dimIH}
         \begin{tabular}{ c| c c c}
              $p_1(M)\in \Lambda$ & $2x$ & $3x$ & $4x$  \\\hline
            $\dim I_H^{\leq 10}$ & 3 & 0 & 0 \\
        \end{tabular}
\end{equation}
In the next section, we discuss a strategy to treat all values of $p_1(M)$ at the same time.

\subsection{Parametrized Hirzebruch ideal}\label{ParametrizedHirzebruchSection}
The main new idea in this paper is to treat the values of the Pontrjagin classes as additional parameters subject to the relation enforced by the signature theorem. We can then study the quotient $B/I_H$ and apply a version of Grothendieck's generic freeness theorem to obtain information about the dimension of $B/I_H$ as we vary the Pontrjagin classes.

\smallskip
In the following, we demonstrate this idea in the case $M\simeq \HH P^2$. For $i=1,2$ denote by $p_i(\Lambda)=p_ix^i\in \Lambda$ with $p_i\in \Q$ representatives of the two nontrivial Pontrjagin classes. By the signature theorem $p_2=\frac{1}{7}(45+p_1^2)$, so that we add one additional parameter $P_1$ of degree $0$ that parametrizes the value of the first Pontrjagin class to our model in Proposition \ref{modelE=Efw}
\begin{equation}\label{ParametrizedUniversalFibration}
\overline{B}:=\Q[P_1][a_8,a_{12},p_{1,0},p_{2,0},p_{2,1},p_{3,0},p_{3,1},p_{3,2}]\lra \overline{E}:=\overline{B}[x]/(x^3+a_8x+a_{12})
\end{equation}
with elements in $\overline{E}$
\begin{align*}
  e^{\fw}(\pi)&=3x^2+a_8, & p_2(T_{\pi}E)&=\frac{1}{7}(45+P_1^2)x^2+p_{2,1}x+p_{2,0},\\
 p_1(T_{\pi}E)&=P_1x+p_{1,0}, & p_3(T_{\pi}E)&=p_{3,2}x^2+p_{3,1}x+p_{3,0},
\end{align*}
that extend the representatives of the characteristic classes of $T_{\pi}E\ra E$ with the additional parameter $P_1$. Denote by $\overline{I}_H\subset \overline{B}$ the Hirzebruch ideal generated by $\kappa_{L_i}\in \overline{B}$ for $i>2$, and consider the map of rings 
\begin{equation}\label{ParametrizedHirzebruch}\va:\Q[P_1]\ra \overline{B}/\overline{I}_H.\end{equation}
We can recover the quotient of the Hirzebruch ideal for a specific value $p_1\in \Q$ of the first Pontrjagin class $p_1(M)=p_1x\in \Lambda$ from the morphism $\va$ in \eqref{ParametrizedHirzebruch} as $\Q[P_1]/(P_1-p_1)\otimes _{\Q[P_1]}\overline{B}/\overline{I}_H$, where $(P_1-p_1)\subset \Q[P_1]$ denotes the maximal ideal generated by $P_1-p_1\in \Q[P_1]$ and where $\overline{B}/\overline{I}_H$ is a $\Q[P_1]$-algebra via $\va$.

\smallskip 
More generally, given a homomorphism $\va:R\ra S$ of Noetherian rings such that $S$ is a finitely generated $R$-algebra, one can study how the fibres $K(R/P)\otimes_RS$ vary as we vary the prime $P\subset R$ (where $K(R/P) $ denotes the field of fractions of $R/P$). Grothendieck's generic freeness lemma implies that over some open set of $\text{Spec}(R)$, the fibres share some common properties.

The precise statement we use is given in \cite[Thm 14.8b]{Eis95}, and states that if $S=\bigoplus_{i\geq 0} S_0$ is a positively graded algebra, finitely generated over $R=S_0$, then for each integer $e$ there is an ideal $J_e\subset R$ so that for any prime ideal $P\subset R$, the following are equivalent
\[\Kdim K(R/P)\otimes _RS \geq e \quad \text{if and only if} \quad P\supset J_e.\]
This can be applied to the map $\va$ in \eqref{ParametrizedHirzebruch} and with the aide of computer computations we find the following.
\begin{thm}\label{AlgebraicMain}
 The Krull dimension of the fibre of $\va$ over maximal ideals $(P_1-p_1)\subset \Q[P_1]$ vanishes for all but finitely many $p_1\in \Q$.
\end{thm}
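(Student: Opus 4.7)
The plan is to deduce the statement by invoking Grothendieck's generic freeness theorem in the form \cite[Thm 14.8b]{Eis95} already cited above. First I would verify the hypotheses for $\varphi$: the ring $\overline{B}/\overline{I}_H$ is a finitely generated, positively graded $\Q[P_1]$-algebra whose degree zero component is exactly $\Q[P_1]$. This holds because every generator of $\overline{B}$ other than $P_1$ has positive degree, and each generator $\kappa_{L_i}$ of $\overline{I}_H$ has positive degree $4i-8$ for $i \geq 3$, so no relation in $\overline{I}_H$ touches $\overline{B}_0 = \Q[P_1]$. The theorem then supplies, for each $e$, an ideal $J_e \subset \Q[P_1]$ with
\[
\Kdim\bigl(K(\Q[P_1]/P) \otimes_{\Q[P_1]} \overline{B}/\overline{I}_H\bigr) \geq e \quad \Longleftrightarrow \quad P \supset J_e.
\]

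My goal is then to show $J_1 \neq (0)$. Since $\Q[P_1]$ is a principal ideal domain of Krull dimension one, any nonzero ideal has the form $(f)$ and is contained in only finitely many maximal ideals, namely the $(P_1 - p_1)$ with $p_1$ a root of $f$. Consequently, $J_1 \neq (0)$ immediately forces the set of $p_1 \in \Q$ for which the fibre has Krull dimension $\geq 1$ to be finite, which is exactly the assertion of the theorem.

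To exhibit $J_1 \neq (0)$ it suffices to find one value $p_1 \in \Q$ at which the fibre has Krull dimension zero. Specializing $P_1 \mapsto p_1$ turns $\overline{B}/\overline{I}_H$ into the non-parametrized quotient $B/I_H$ of Proposition~\ref{modelE=Efw} for a manifold $M \simeq \HH P^2$ with $p_1(M) = p_1 x$; note that the substitution $p_2 = \tfrac{1}{7}(45+p_1^2)$ built into $\overline{B}$ matches the signature constraint at $p_1$. Since $I_H^{\leq k} \subset I_H$ for every $k$, one has $\Kdim B/I_H \leq \Kdim B/I_H^{\leq k}$, and the sample Macaulay2 computations recorded in \eqref{dimIH} give $\Kdim B/I_H^{\leq 10} = 0$ at $p_1 = 3$ (and at $p_1 = 4$). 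Hence the fibre of $\varphi$ over $(P_1-3)$ has Krull dimension zero, so $(P_1-3) \not\supset J_1$, and $J_1 \neq (0)$.

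The main obstacle is the single computer-algebra input: one must genuinely carry out the Gröbner basis computation producing at least one witness $p_1 \in \Q$ with $\Kdim B/I_H^{\leq k} = 0$. Once such a witness is available, the generic freeness formalism combined with the one-dimensionality of $\Q[P_1]$ automatically upgrades this single data point into the uniform bound asserted by the theorem, and no further case analysis is required.
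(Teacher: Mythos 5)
Your proposal is correct and follows essentially the same route as the paper: apply Grothendieck's generic freeness in the form of \cite[Thm 14.8b]{Eis95} to $\va$, use the Macaulay2 computations in \eqref{dimIH} (e.g.\ $p_1=3$) to witness a zero-dimensional fibre and hence $J_1\neq (0)$, and conclude via the fact that a nonzero ideal of $\Q[P_1]$ lies in only finitely many maximal ideals, together with $\Kdim B/I_H\leq \Kdim B/I_H^{\leq 10}$. Your added verification of the grading and finite-generation hypotheses is a welcome elaboration but does not change the argument.
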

\begin{proof}
  One can compute the dimension of the fibres of $\va:\Q[P_1]\ra \overline{B}/\overline{I}_H^{\leq 10}$ over the maximal ideals $(P_1-p_1)$ for particular values of $p_1\in \Q$ using Macaulay2 and we have found that for certain values of $p_1$ that the dimension of the fibre vanishes (see \eqref{dimIH}). 
  By \cite[Thm 14.8b]{Eis95} stated above, it follows that for $e=1$ the ideal $J_1$ is not zero, and thus only finitely many maximal ideals $(P_1-p_1)$ can contain $J_1$. Hence, for almost all values of $p_1$ the dimension of the fibre vanishes, and since $\dim I_H\leq \dim I_H^{\leq 10}$ the statement follows.
\end{proof}

Our main Theorem can be deduced from this algebraic statement. There are infinitely many fake quaternionic spaces $M\simeq \HH P^2$ \cite{Hsi66,EKu62} that are classified up to diffeomorphisms by the Pontrjagin number $p_1(M)^2$ \cite{KrSt07}. In our main theorem, we show that the Krull dimension of $R^*(M)$ vanishes for all fake quaternionic spaces except for a finite number of cases.

\begin{thm}\label{Main}
	For almost all manifolds $M\simeq \HH P^2$, we have $\Kdim R^*(M)=0$..
\end{thm}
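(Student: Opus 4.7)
The plan is to assemble the previously developed bounds into a chain
\[\Kdim R^*(M)\;\leq\;\Kdim B/I_H^{\leq 10},\]
and then invoke Theorem~\ref{AlgebraicMain} to conclude that the right-hand side vanishes for all but finitely many values of $p_1(M)$. Since fake quaternionic spaces form an infinite family classified up to diffeomorphism by $p_1(M)^2\in \Q$ \cite{KrSt07}, only finitely many of them will fail to satisfy $\Kdim R^*(M)=0$.

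Fix $M\simeq \HH P^2$. First I would observe that since $\HH P^2$ is simply connected and rationally elliptic, $\mathcal{E}(M)=\mathcal{E}(\HH P^2)$ is finite, so Lemma~\ref{Efinite} gives $R^*_h(M)\cong R^*_{h,0}(M)$, and composing with \eqref{ComparisonTautological} yields a surjection $R^*_{h,0}(M)\twoheadrightarrow R^*(M)$. Next I would introduce the ideal $I_H^{sm}\subset R^*(M)$ generated by the classes $\kappa_{L_i}$ for $i\geq 3$ and argue that it is a nil ideal: by the family signature theorem (Theorem~\ref{FamilySignatureTheorem}) each generator satisfies $\kappa_{L_i}=\phi^*(\sigma_{4i-8})$, which is nilpotent because $\B\Aut(H_M,\lambda)$ has finite rational cohomological dimension, and because $R^*(M)$ is concentrated in even degrees, sums and products of nilpotent elements remain nilpotent. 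It follows that every prime of $R^*(M)$ contains $I_H^{sm}$, hence $\Kdim R^*(M)=\Kdim R^*(M)/I_H^{sm}$.

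The surjection $R^*_{h,0}(M)\twoheadrightarrow R^*(M)$ sends the homotopical $\kappa_{L_i}$ to their smooth counterparts and therefore descends to a surjection $R^*_{h,0}(M)/(I_H\cap R^*_{h,0}(M))\twoheadrightarrow R^*(M)/I_H^{sm}$. Combined with the middle inequality in the proof of Lemma~\ref{dimI} (whose finite generation hypotheses hold by Proposition~\ref{modelE=Efw}), this produces
\[\Kdim R^*(M)/I_H^{sm}\;\leq\;\Kdim R^*_{h,0}(M)/(I_H\cap R^*_{h,0}(M))\;\leq\;\Kdim B/I_H^{\leq 10}.\]
For a fixed $p_1\in \Q$, the rightmost ring is precisely the fibre $\Q[P_1]/(P_1-p_1)\otimes_{\Q[P_1]}\overline{B}/\overline{I}_H^{\leq 10}$ of the morphism $\va$ from \eqref{ParametrizedHirzebruch}, so Theorem~\ref{AlgebraicMain} completes the argument.

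The step I expect to be most delicate is the upgrade from a bound on $R^*_0(M)$ (which Lemma~\ref{dimI} supplies directly) to one on $R^*(M)$ itself. The family signature theorem gives $\kappa_{L_i}=0$ identically in $R^*_0(M)$ but only nilpotence in $R^*(M)$, so some extra input is required; turning that nilpotence into the equality $\Kdim R^*(M)=\Kdim R^*(M)/I_H^{sm}$ is what transports the algebraic content of Theorem~\ref{AlgebraicMain} into a genuine statement about the smooth tautological ring.
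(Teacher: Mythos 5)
Your proposal is correct and follows the paper's strategy essentially step for step: Lemma \ref{Efinite} to reduce to $R^*_{h,0}(M)$, the Hirzebruch ideal quotient, Lemma \ref{dimI}, and Theorem \ref{AlgebraicMain}. The one place you genuinely diverge is in transporting the family signature relations into $R^*(M)$: the paper notes that $\Aut(H_M,\lambda)$ is itself finite for $M\simeq \HH P^2$ (the middle cohomology is one-dimensional), so $H^{>0}(\B\Aut(H_M,\lambda);\Q)=0$ and the classes $\kappa_{L_i}$ vanish identically in $R^*(M)$, giving a literal surjection $R_h^*(M)/(R^*_h(M)\cap I_H)\twoheadrightarrow R^*(M)$; you instead use only their nilpotence (via finite rational cohomological dimension of an arithmetic group) and the observation that, since $R^*(M)$ is concentrated in even degrees and hence commutative, the ideal they generate lies in the nilradical and so does not change the Krull dimension. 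Your version is marginally more general --- it would survive for manifolds where $\Aut(H_M,\lambda)$ is infinite --- at the cost of an extra (correct) commutative-algebra step; both are valid, and the rest of your chain of inequalities matches the paper's.
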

\begin{proof}
 The group of homotopy self-equivalences $\mathcal{E}(M)$ is finite \cite[Sect.\,9]{Ba96} and hence the map $\B\haut_0(M)^{e=e^{\fw}}\ra \B\haut(M)^{e=e^{\fw}}$ induces an isomorphism $R^*_h(M)\overset{\cong}{\lra} R^*_{h,0}(M)$ by Lemma \ref{Efinite} and consequently also of Hirzebruch ideals. Moreover, since the automorphism group $\Aut(H_M,\lambda)$ is finite as well, $\kappa_{L_i}\in R^*(M)$ is not just nilpotent but vanishes. Therefore the factorisation \eqref{surjection} lifts to 
 \[R_h^*(M)\twoheadrightarrow R_h^*(M)/(R^*_h(M)\cap I_H) \twoheadrightarrow R^*(M).\]
 We can then apply Theorem \ref{AlgebraicMain} and Corollary \ref{dimI} to $R_h^*(M)/(R^*_h(M)\cap I_H)$ to see that the Krull dimension of the quotient ring $R_h^*(M)/(R^*_h(M)\cap I_H)$ vanishes for all but finitely many values of the first Pontrjagin class and hence so does $\Kdim R^*(M)$.
\end{proof}
\begin{rem}
 The proof of Theorem \ref{Main} works more generally for 1-connected, closed oriented manifolds $M$ that are rationally homotopy equivalent to $\HH P^2$ if the dimension of the fibre of \eqref{ParametrizedHirzebruch} vanishes for $p_1(M)$. Just that it is less clear what \emph{almost all} means in this context.
\end{rem}
\section{Torus actions and the \texorpdfstring{$\hat{A}$}{A}-genus}\label{endsection}
Theorem \ref{Main} begs the question whether we can determine precisely the set of all fake quaternionic spaces $M$ with $\Kdim R^*(M)>0$. Or relatedly, if we can determine the locus of the exceptional fibres of $\va$ in \eqref{ParametrizedHirzebruch}, i.e.\,the values of $P_1$ for which the fibres have a positive dimension. We address these questions in this last section.

\subsection{Tautological rings and torus actions} 

The most effective method we currently have to establish lower bounds on $\Kdim R^*(M)$ is using group actions on $M$, and the most streamlined and general statement is again due to Randal-Williams \cite{RW16}. Let $M$ be a manifold with a smooth torus action, i.e.\,a continuous group homomorphism $T^k\ra \Diff_0(M)$, and consider the associated smooth $M$-bundle given by the Borel construction $\pi:M\sslash T^k\ra *\sslash T^k$. The cohomology ring of $*\sslash  T^k=\B T^k$ is a polynomial ring $H^*_T:=\Q[x_1,\hdots,x_k]$ and we denote the tautological ring of $\pi$ by $R^*_T(M)\subset H^*_T$. One of the main results of Randal-Williams \cite[Thm\,3.1]{RW16} provides a criterion on the action that guarantees that $H^*_T$ is integral over $R^*_T(M)$ and hence $\Kdim R^*(M)\geq \Kdim R^*_T(M)=k$.  We state a special case of this theorem below.
\begin{thm}[{\cite[Cor.\,B]{RW16}}]\label{LowerBoundKdim}
	Let $T^k$ act effectively on a smooth manifold $M$ such that the fixed set $W^T$ is discrete and non-empty. Then $H^*_T$ is integral over $R^*_T(M)$ and therefore $\Kdim R^*(M)\geq k$.
\end{thm}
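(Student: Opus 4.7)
The plan is to establish integrality of the extension $R^*_T(M)\subset H^*_T$ via equivariant localization, after which the Krull-dimension statement is formal. The classifying map $\B T^k\to \B\Diff^+(M)$ of the Borel construction $\pi\colon M\sslash T^k\to \B T^k$ pulls the universal $\kappa$-classes back to the $\kappa$-classes of $\pi$, so $R^*_T(M)$ is a quotient of $R^*(M)$; integrality of $R^*_T(M)\subset H^*_T$ then gives $\Kdim R^*(M)\geq \Kdim R^*_T(M)=\Kdim H^*_T=k$.

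For each $w\in W^T$, the torus $T^k$ acts on $T_wM$ with weights $\alpha_1^w,\ldots,\alpha_{d/2}^w\in H^2_T\cong\Hom(T^k,S^1)\otimes\Q$, presenting $T_wM$ as a sum of oriented $2$-planes. The Atiyah--Bott--Berline--Vergne localization formula applied to the equivariant fibre integration $\int_\pi\colon H^*_T(M)\to H^*_T$ expresses, for any $c\in H^*(\B\SO(d);\Q)$,
\[\kappa_c=\sum_{w\in W^T}\frac{c(\alpha_1^w,\ldots,\alpha_{d/2}^w)}{\prod_i\alpha_i^w}\in H^*_T,\]
with $c$ evaluated via $p_j(T_wM)=\sigma_j((\alpha_i^w)^2)$ and $e(T_wM)=\prod_i\alpha_i^w$. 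Choosing $c=e\cdot P$ for an arbitrary polynomial $P$ in the Pontrjagin classes clears the denominator and produces
\[\kappa_{e\cdot P}=\sum_{w\in W^T}P(\alpha_1^w,\ldots,\alpha_{d/2}^w)\in R^*_T(M),\]
a symmetric combination of fixed-point weights lying inside the tautological ring. Effectiveness together with isolatedness of $W^T$ forces $\bigcup_w\{\alpha_i^w\}$ to span $H^2_T\otimes\Q$: any nonzero rational linear functional on $H^2_T$ vanishing on every weight would generate a subcircle $S\subset T^k$ acting trivially on each $T_wM$, hence fixing a neighbourhood of each $w$, and therefore all of $M$ by connectedness, contradicting effectiveness.

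From these sums I would extract monic integral relations for each generator $x_j\in H^*_T$. The fundamental theorem of symmetric functions combined with Newton's identities translates the $\kappa_{e\cdot P}$ into polynomial identities in the $x_j$, and the span condition above ensures that enough independent such identities are produced. The principal obstacle is the final assembly: the weights at different fixed points are linear functionals constrained by representation theory, so combining fixed-point-wise symmetric identities into a single globally monic relation with coefficients in $R^*_T(M)$ requires care. Following the strategy of Randal-Williams in \cite{RW16}, the cleanest route is to pass to the fraction field of $H^*_T$, use equivariant localization to isolate the contribution of a single fixed point whose weights span $H^2_T$ rationally (which must exist by the effectiveness argument above), and then clear denominators to globalize; verifying that the resulting relations genuinely lie in $R^*_T(M)$ itself, rather than only in a localization, is where the technical difficulty resides.
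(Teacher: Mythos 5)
The paper does not actually prove this statement---it is quoted from \cite{RW16}---so I am comparing your sketch against the argument given there. Your first two steps are sound: the Atiyah--Bott--Berline--Vergne localization formula for the Borel construction does give $\kappa_{e\cdot P}=\sum_{w\in W^T}P(\alpha_1^w,\dots,\alpha_{d/2}^w)\in R^*_T(M)$, and your linearization argument correctly shows that effectiveness forces the weights to span $H^2(\B T^k;\Q)$ (applied at a single $w$ it even shows the weights at each individual fixed point span). The formal reduction $\Kdim R^*(M)\ge\Kdim R^*_T(M)=\Kdim H^*_T=k$ is also fine. The gap is exactly where you flag it: you never produce monic relations with coefficients in $R^*_T(M)$, and the route you propose---isolating the contribution of one fixed point over the fraction field and then ``clearing denominators''---cannot produce them, because multiplying a dependence relation by a denominator destroys monicity; a relation that holds only after localization does not establish integrality.

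The missing idea, which is how Randal-Williams closes the argument, is to treat all squared weights at all fixed points as one multiset and never attempt to separate the fixed points. For each $m\ge 1$ the power sum $\sum_{w,i}(\alpha_i^w)^{2m}$ equals $\sum_w s_m\bigl(p_1(T_wM),\dots,p_{d/2}(T_wM)\bigr)=\kappa_{e\cdot s_m(p_1,\dots,p_{d/2})}$, where $s_m$ is the Newton polynomial expressing the $m$-th power sum in elementary symmetric functions; hence it lies in $R^*_T(M)$. Over $\Q$, Newton's identities then place every elementary symmetric function of the multiset $\{(\alpha_i^w)^2\}_{w,i}$ in $R^*_T(M)$, so the single monic polynomial $F(X)=\prod_{w,i}\bigl(X-(\alpha_i^w)^2\bigr)$ has coefficients in $R^*_T(M)$ and annihilates every $(\alpha_i^w)^2$. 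Each $(\alpha_i^w)^2$ is therefore integral over $R^*_T(M)$, hence so is each $\alpha_i^w$ (a root of $X^2-(\alpha_i^w)^2$); since these degree-two classes span $H^2_T\otimes\Q$ and generate $H^*_T$ as a ring, $H^*_T$ is integral over $R^*_T(M)$. This bypasses entirely the Vandermonde-type separation of fixed points that your sketch identifies as the obstacle, and it needs only that the union of all weight sets spans---precisely what your effectiveness argument delivers.
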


$\HH P^2$ admits a $T^2\subset (\C^{\times})^2$ action given by $(\lambda_1,\lambda_2)\cdot [h_0{:}h_1{:}h_2]=[h_0{:}\lambda_1h_1{:}\lambda_2h_2]$ with isolated fixed points $[1{:}0{:}0]$, $[0{:}1{:}0]$ and $[0{:}0{:}1]$, and so Randal-Williams' theorem implies that $\Kdim R^*(\HH P^2)\geq 2$. We obtain an upper bound on $\Kdim R^*(\HH P^2)$ by Lemma \ref{dimI} by the dimension the fibre of $\va$ in \eqref{ParametrizedHirzebruch} for $p_1=2$ (the value of the first Pontrjagin class of $\HH P^2$) and which we computed with Macaulay2 to be $\Kdim \Q[P_1]/(P_1-2)\otimes_{\Q[P_1]}\overline{B}/\overline{I}_H=3$. 

\begin{prop}
	$2\leq \Kdim R^*(\HH P^2)\leq 3$.
\end{prop}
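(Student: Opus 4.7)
The proof has two halves, and both are essentially invocations of machinery already developed in the paper.

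For the lower bound, the plan is to apply Theorem \ref{LowerBoundKdim} to the standard torus action on $\HH P^2$. The quaternionic projective plane carries the effective $T^2$-action induced from $(\lambda_1,\lambda_2)\cdot[h_0{:}h_1{:}h_2]=[h_0{:}\lambda_1 h_1{:}\lambda_2 h_2]$, whose fixed set consists exactly of the three isolated points $[1{:}0{:}0]$, $[0{:}1{:}0]$, $[0{:}0{:}1]$. Since the fixed set is discrete and non-empty and the action is effective, Theorem \ref{LowerBoundKdim} (i.e.\ Corollary B of \cite{RW16}) immediately yields $\Kdim R^*(\HH P^2)\geq 2$.

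For the upper bound, the plan is to specialise the parametric analysis of Section \ref{ParametrizedHirzebruchSection} to the actual value of the first Pontrjagin class of $\HH P^2$. Recall that $p_1(\HH P^2)=2x$ in our normalisation, so the relevant fibre of the map $\va$ in \eqref{ParametrizedHirzebruch} is over the maximal ideal $(P_1-2)\subset \Q[P_1]$. By Lemma \ref{dimI}, together with the inclusion $I_H^{\leq 10}\subset I_H$, we have
\[
\Kdim R^*(\HH P^2)\;\leq\; \Kdim \bigl(\Q[P_1]/(P_1-2)\otimes_{\Q[P_1]}\overline{B}/\overline{I}_H^{\leq 10}\bigr).
\]
The point is that after specialising $P_1=2$, the ring $\overline{B}/\overline{I}_H^{\leq 10}$ becomes a concrete finitely presented graded $\Q$-algebra on the eight generators $a_8,a_{12},p_{1,0},p_{2,0},p_{2,1},p_{3,0},p_{3,1},p_{3,2}$, modulo the ideal generated by the polynomials $\kappa_{L_k}$ for $3\leq k\leq 10$, whose cocycle representatives are computed explicitly from \eqref{RepClasses} and the push-pull formula $\pi_!(x^i)$ of Section~3.3.

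The final step is the Macaulay2 computation of the Krull dimension of this quotient ring, which gives the value $3$. Combining with the lower bound yields $2\leq \Kdim R^*(\HH P^2)\leq 3$. The main obstacle here is not conceptual but computational: the $\kappa_{L_k}$-generators grow rapidly with $k$, so one must truncate at a reasonable $k$ (here $k=10$) and trust that the bound is already sharp at this truncation; in principle one could improve the bound by going higher in $k$, though the expectation is that the dimension stabilises quickly and that the genuine answer lies between $2$ and $3$.
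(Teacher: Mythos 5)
Your proposal is correct and follows essentially the same route as the paper: the lower bound via the effective $T^2$-action on $\HH P^2$ with the three isolated fixed points and Theorem \ref{LowerBoundKdim}, and the upper bound via Lemma \ref{dimI} together with the Macaulay2 computation of the dimension of the fibre of $\va$ at $P_1=2$, which equals $3$. Your remark that the truncation $I_H^{\leq 10}\subset I_H$ still yields a valid (if possibly non-sharp) upper bound is accurate and consistent with how the paper uses the table \eqref{dimIH}.
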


It turns out that this is the only case where we can find lower bounds on $\Kdim R^*(M)$ via torus actions by the following result of Atiyah and Hirzebruch.
 \begin{thm}[\cite{AH70}]\label{ahat}
 Let $M^{4k}$ be a compact oriented smooth manifold with $w_2(M)=0$. If a connected compact Lie group $G$ acts non-trivially on $M$ then $\hat{A}(M)=0$.
 \end{thm}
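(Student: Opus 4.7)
The plan is to interpret $\hat{A}(M)$ as the index of the Dirac operator on $M$ and exploit the equivariance of this operator under the $G$-action. First I would reduce to the case $G=S^1$: since $G$ is compact connected and acts non-trivially, the kernel of $G\to\Diff(M)$ is a closed normal subgroup that cannot contain a maximal torus $T\subset G$ (as conjugates of $T$ generate $G$), so $T$ acts non-trivially; choosing a circle in $T$ whose image is non-trivial in $T/\ker$ produces a non-trivial smooth $S^1$-action on $M$.

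Using $w_2(M)=0$, I would fix a spin structure on $M$ and let $D$ denote the associated Dirac operator, for which $\ind(D)=\hat{A}(M)$ by Atiyah-Singer. The obstruction to lifting the $S^1$-action to the principal $\spin(4k)$-bundle is $2$-torsion, so after replacing $S^1$ by its connected double cover $\widetilde{S}^1$ if needed, $D$ becomes $\widetilde{S}^1$-equivariant. Both $\Ker D$ and $\Coker D$ then decompose into weight spaces, yielding an equivariant index
\[
\ind_{\widetilde{S}^1}(D)(t)=\sum_{n\in\Z}a_n t^n\in R(\widetilde{S}^1)\cong\Z[t,t^{-1}],
\]
whose evaluation at $t=1$ equals $\hat{A}(M)$.

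The main step is the Atiyah-Hirzebruch \emph{rigidity theorem} for the Dirac operator. The Atiyah-Bott-Segal-Singer fixed point formula expresses $\ind_{\widetilde{S}^1}(D)(t)$ as a sum over connected components $F\subset M^{S^1}$ of integrals
\[
\int_F\hat{A}(F)\prod_i\frac{1}{t^{m_i/2}-t^{-m_i/2}},
\]
where the $m_i$ are the weights of the $S^1$-action on the normal bundle of $F$. A careful asymptotic analysis as $t\to 0$ and $t\to\infty$ shows that this Laurent polynomial extends to a holomorphic function on $\mathbb{P}^1(\C)$, hence is constant. Establishing this rigidity is the main obstacle, since the naive fixed-point expression has apparent poles at the roots of unity where $t^{m_i/2}=t^{-m_i/2}$, and one must show that the spin denominators balance exactly against the spinor numerators at every such root.

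Granted rigidity, the final step is to identify the constant. Because the $S^1$-action is non-trivial, every fixed component has at least one non-zero normal weight, and the corresponding factor $(t^{m_i/2}-t^{-m_i/2})^{-1}$ forces the leading Laurent coefficient of each fixed-point contribution to vanish at $t=0$ (or at $t=\infty$, depending on the sign of $m_i$). The constant value of $\ind_{\widetilde{S}^1}(D)$ must therefore be zero, and evaluating at $t=1$ yields $\hat{A}(M)=0$.
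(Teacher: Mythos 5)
Your outline is correct and is essentially the original Atiyah--Hirzebruch argument, which the paper does not reprove but simply cites: reduce to a non-trivial $S^1$-action, lift it to the spin principal bundle after passing to the double cover of $S^1$, and show that the $S^1$-equivariant index of the Dirac operator vanishes identically by combining the Lefschetz fixed point formula with the asymptotics as $t\to 0$ and $t\to\infty$, each fixed component contributing a term that decays because its normal bundle has a non-zero weight. The only inessential overstatement is the worry about poles at roots of unity: since the equivariant index is a priori a finite Laurent polynomial (a character in $R(S^1)$), those apparent singularities of the fixed-point expression are automatically removable, and the real work is confined to the behaviour at $0$ and $\infty$, which you correctly identify as the heart of the proof.
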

 The $\hat{A}$-genus is the ring homomorphism $\hat{A}:\Omega^{\SO}_*\otimes \Q\ra \Q$ associated to the multiplicative sequence induced by the power series of the function $\frac{x/2}{\text{sinh}(x/2)}$. Its first two elements as polynomials in the Pontrjagin classes are given by 
 	\begin{align*}
 		\hat{A}_1(p_1)&=-\frac{1}{24}p_1,\\
 		\hat{A}_2(p_1,p_2)&=\frac{1}{5760}(-4p_2+7p_1^2),
 	\end{align*}
 and the $\hat{A}$-genus of a manifold $M^{4k}$ is defined as $\hat{A}(M^{4k}):=\int_M\hat{A}_k(M)$.
 
\smallskip
 Now consider a fake quaternionic space $M\simeq \HH P^2$ that satisfies $\hat{A}(M)=0$. By the signature theorem, we have that $L(M)=\int_ML_2(M)=1$, and the only possible solutions to these two equations are $p_1(M)=\pm 2x$. By the classification of fake quaternionic manifolds \cite[Thm 1.3]{KrSt07}, $\HH P^2$ (with the usual smooth structure) is the only one with this value of the first Pontrjagin class, and thus a fake quaternionic manifold $M$ only admits a nontrivial torus action if $M\cong \HH P^2$. We suspect that $\HH P^2$ is also the only exception to Theorem \ref{Main}.
 
 \subsection{Homotopical tautological rings and the \texorpdfstring{$\hat{A}$}{A}-genus}
 
 Similarly, we can ask for the exceptions to Theorem \ref{AlgebraicMain}, i.e.\,the values of $p_1\in \Q$ for which the dimension of $\Q[P_1]/(P_1-p_1)\otimes_{\Q[P_1]}\overline{B}/\overline{I}_H$ is positive (which is a necessary condition for $\Kdim R^*(M)>0$). This problem is completely algebraic and equivalent to determining the ideal $J_1$ from \cite[Thm 14.8b]{Eis95} that we used in the proof above and which appears difficult to compute.  
 
 \medskip
 
 One interesting observation, however, that we discuss in this last section is that the vanishing of the $\hat{A}$-genus provides a completely algebraic criterion to distinguish the values of the first Pontrjagin class for which the dimension of the corresponding Hirzebruch ideal is positive. This relationship is completely unclear from the purely commutative algebra perspective, and we present some evidence that this is not merely a coincidence, and that the $\hat{A}$-genus is a suitable algebraic substitute for the geometric concept of a circle action to obtain lower bounds on the Krull dimension of the homotopical tautological ring.
 
 \smallskip
 
 To that end, we revisit the construction of the parametrized Hirzebruch ideal in \eqref{ParametrizedUniversalFibration}, but now we add two formal parameters $P_1$ and $P_2$ that parametrize the value of $p_1(M)$ and $p_2(M)$. More precisely, we consider 
 \begin{equation}
\overline{B}:=\Q[P_1,P_2][a_8,a_{12},p_{1,0},p_{2,0},p_{2,1},p_{3,0},p_{3,1},p_{3,2}]\lra \overline{E}:=\overline{B}[x]/(x^3+a_8x+a_{12}),
\end{equation}
where $P_1,P_2$ are formal parameters of degree $0$, with elements in $\overline{E}$
\begin{align*}
  e^{\fw}(\pi)&=3x^2+a_8, & p_2(T_{\pi}E)&=P_2x^2+p_{2,1}x+p_{2,0},\\
 p_1(T_{\pi}E)&=P_1x+p_{1,0}, & p_3(T_{\pi}E)&=p_{3,2}x^2+p_{3,1}x+p_{3,0},
\end{align*}
that represent the characteristic classes.
\begin{rem}
 The difference to what we have done in the previous section is that we do not enforce the signature theorem $45=7P_2-P_1^2$ in this algebraic model. This is becaus we are interested in this purely algebraic problem and its relation to the $\hat{A}$-genus detached from the geometry of manifolds.
\end{rem}

By abuse of notation, we again denote by $\overline{I}_H$ the parametrized Hirzebruch ideal generated by $\kappa_{L_i}\in \overline{B}$ for $i>2$ in $\overline{B}$, and consider the map
\[\tilde{\va}: \Q[P_1,P_2]\lra \overline{B}/\overline{I}_H.\]
The dimension of the fibre of $\tilde{\va}$ over the maximal ideal $(P_1-p_1,P_2-p_2)\subset \Q[P_1,P_2]$ for $p_1,p_2\in\Q$ corresponds to the dimension of the quotient of $B/I_H$ for that particular choice for the value of the first and second Pontrjagin class (although there is not necessarily a manifold with these Pontrjagin classes). The argument of Theorem \ref{AlgebraicMain} still applies to show that the generic fibre have vanishing dimension, but the exceptional fibres are now not isolated points in $\mathbb{A}^2$ but potentially could be given by a one-dimensional variety.

\medskip
We have computed the dimension of the fibre of $\tilde{\va}:\Q[P_1,P_2]\ra\overline{B}/\overline{I}_H^{\leq 10}$ for several points in $\mathbb{A}^2$ using Macaulay2. The interesting observation is that for all examples we checked for which the $\hat{A}$-genus vanishes, i.e.\,points on $V(-4P_2+7P_1^2)\subset \mathbb{A}^2$, the dimension of the fibre of $\tilde{\va}$ is at least $1$. So even though this question is detached from the existence of circle actions on manifolds, the vanishing of the $\hat{A}$-genus of this algebraic family still seems to provide a positive lower bound on the Krull dimension of $B/I_H$ for the corresponding value of the Pontrjagin classes. We do not have an explanation for this observation but it hints at a connection of the vanishing of the $\hat{A}$-genus and lower bounds on the Krull dimension of homotopical tautological rings.

\medskip
	We have summarized our sample computations in the following picture of $\mathbb{A}^2$, where every circle represents a point in the affine plane for which the Krull dimension of the fibre of $\tilde{\va}:\Q[P_1,P_2]\ra \overline{B}/\overline{I}^{\leq 10}_{H} $ is $1$, and every square corresponds to a point where the dimension of the fibre is $3$. We have also indicated the varieties $V(L_2)$ and $V(\hat{A}_2)$, where we expect the dimension of the fibre to be positive, and for completeness also $V(L_2-1)$ which contains the values of $p_1$ and $p_2$ of fake quaternionic projective spaces by the signature theorem.
	\begin{center}
		\begin{tikzpicture}[scale=0.9]
		\begin{axis}[
		axis lines = middle,
		xlabel = $P_1$,
		ylabel = {$P_2$},
		xmin=-4, xmax=4,
		ymin=-3, ymax=30,
		xtick={-2,2},
		ytick={7,7},
		legend pos=outer north east,
		]
		\addplot [
		domain=-4:4, 
		samples=100, 
		color=red,
		]
		{7*x^2/4};
		\addlegendentry{$\hat{A}_2(p_1,p_2)=0$}
		\addplot [
		domain=-4:4, 
		samples=101, 
		color=blue,
		]
		{(45+x^2)/7};
		\addlegendentry{$L_2(p_1,p_2)=1$}

		\addplot [
		domain=-4:4, 
		samples=1000, 
		color=gray,
		]
		{x^2/7};
		\addlegendentry{$L_2(p_1,p_2)=0$}
		\addplot[
		only marks,    
		color=black,
		mark=square*,
		mark size=2pt,
		]
		coordinates {(0,0)(2,7)(-2,7)};
		\addplot[
		only marks,    
		color=black,
		mark=*,
		mark size=1pt,
		]
		coordinates {(1,7/4)(-1,7/4)(3,7*9/4)(4,28)(-3,7*9/4)(-4,28)(1/2,7/16)(3/2,7*9/16)(-1/2,7/16)(-3/2,7*9/16)};
		
		\addplot[
		only marks,    
		color=black,
		mark=*,
		mark size=1pt,
		]
		coordinates {(1,1/7)(2,4/7)(3,9/7)(4,16/7)(1/2,1/28)(3/2,9/28)(-1,1/7)(-2,4/7)(-3,9/7)(-4,16/7)(-1/2,1/28)(-3/2,9/28)};
		\end{axis}
		\end{tikzpicture}
	\end{center}	
\begin{rem}
The computation of the dimension of the fibre of $\tilde{\va}:\Q[p_1,p_2]\ra \overline{B}/\overline{I}_H$ does not determine the dimension of $R^*_{h,0}(M)/(I_H\cap R^*_{h,0}(M))$ but merely provides an upper bound. We suspect that this upper bound is sharp and have verified that it is in some cases, but the computation takes significantly more time.
\end{rem}

Based on these limited computations and the connection to lower bounds on the Krull dimension of tautological rings, we make the following optimistic conjecture.
\begin{conj}\label{Aconjecture}
	Let $M$ be a closed, oriented, smooth manifold that is positively rationally elliptic and that satisfies the Halperin conjecture. If $\hat{A}(M)=0$ then $\Kdim R^*_{h,0}(M)/(I_H\cap R^*_{h,0}(M))>0$.
\end{conj}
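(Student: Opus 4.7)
The plan is to extend the parametrized approach of Section~\ref{ParametrizedHirzebruchSection} from fake quaternionic spaces to all positively rationally elliptic manifolds satisfying Halperin's conjecture, and then to exhibit, whenever $\hat{A}(M)=0$, a transcendental element in $R^*_{h,0}(M)/(I_H\cap R^*_{h,0}(M))$. Since the author notes that even the corresponding statement for the ambient quotient $\overline{B}/\overline{I}_H$ is only numerically corroborated and lacks a conceptual explanation, and that the inclusion of the tautological subring into this quotient is merely an upper bound, the proposal is necessarily speculative at two separate points.

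First I would generalise Proposition~\ref{modelE=Efw}. For a positively rationally elliptic $M$ satisfying Halperin's conjecture, the minimal Sullivan model $\Lambda_M$ is a pure elliptic cdga and Berglund's construction of the abelianised dg Lie model $\mathfrak{a}_M$ in \cite{Ber17,Ber20} goes through without essential change. Introducing formal degree-zero parameters $P_1,\ldots,P_n$ for the coefficients of the Pontrjagin cocycle representatives in $\Lambda_M$ yields a polynomial ring $\overline{B}$ and a relative cdga $\overline{E}$ over $\overline{B}$ modelling the universal $TM$-fibration with free Pontrjagin data. Using the fibre-integration recipe of Section~\ref{RationalReview} each $\kappa_{L_i}$ with $4i>\dim M$ becomes an explicit polynomial in the generators of $\overline{B}$ and the $P_j$, and these classes generate the parametrized Hirzebruch ideal $\overline{I}_H$. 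Applying the generic-freeness statement from \cite[Thm 14.8b]{Eis95} as in Theorem~\ref{AlgebraicMain}, the conjecture reduces to showing that the fibre of the map $\tilde{\va}:\Q[P_1,\ldots,P_n]\to \overline{B}/\overline{I}_H$ has positive Krull dimension at the $\Q$-point $(p_1(M),\ldots,p_n(M))\in V(\hat{A})\subset \mathbb{A}^n$, and then checking that the resulting transcendental class can be represented by a tautological one.

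The first and main obstacle is providing a conceptual reason why $\hat{A}$---and not any other multiplicative genus---is the correct algebraic witness. The most direct attack would be to exhibit a universal syzygy inside $\overline{B}$ expressing some $\kappa_{L_i}$ as a $\overline{B}$-linear combination of the remaining Hirzebruch generators plus a term of the form $c\cdot \hat{A}(P_1,\ldots,P_n)$ for some $c\in \overline{B}$: such a relation would collapse the generating set of $\overline{I}_H$ on $V(\hat{A})$ and supply the missing degree of freedom. A more ambitious alternative would be to construct an algebraic shadow of the Atiyah--Hirzebruch vanishing theorem, producing from the vanishing of $\hat{A}$ a formal $S^1$-action on the parametrized model whose equivariant localisation yields the transcendental element. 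The second obstacle is the passage from $\overline{B}/\overline{I}_H$ to the tautological subring $R^*_{h,0}(M)/(I_H\cap R^*_{h,0}(M))$; in all numerically examined cases the two quotients appear to have the same Krull dimension, which suggests this step may become routine once the first is settled, but without a structural reason it cannot be taken for granted.
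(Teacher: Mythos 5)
The statement you are trying to prove is Conjecture~\ref{Aconjecture}: the paper offers no proof of it, only sample Macaulay2 computations for a handful of points of $V(\hat{A}_2)\subset\mathbb{A}^2$ in the $\HH P^2$ case, and explicitly states that no conceptual explanation is known. Your proposal is therefore not being measured against a proof but against the paper's own admission that the two steps you flag as ``obstacles'' are open. As a proof attempt it has a genuine gap --- in fact two, and you name them yourself without closing either. First, the generic-freeness theorem \cite[Thm 14.8b]{Eis95} only tells you that the locus of parameters where the fibre dimension is $\geq e$ is the closed set $V(J_e)$; it is the right tool for the \emph{generic vanishing} in Theorem~\ref{AlgebraicMain}, but it cannot by itself produce the \emph{lower} bound $\Kdim>0$ on $V(\hat{A})$ that the conjecture requires. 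What you would actually need is the containment $J_1\subseteq\sqrt{(\hat{A})}$ (equivalently, an identity collapsing the Hirzebruch generators along $V(\hat{A})$), and the ``universal syzygy'' you gesture at is precisely the missing content --- no candidate is exhibited, and no reason is given why $\hat{A}$ rather than any other genus should appear.

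Second, even granting positivity of $\Kdim$ for the ambient quotient $\overline{B}/\overline{I}_H$ at the relevant point, the conjecture concerns the subring $R^*_{h,0}(M)/(I_H\cap R^*_{h,0}(M))$, and the inclusion of finitely generated algebras only gives $\Kdim$ of the subring $\leq$ $\Kdim$ of the ambient quotient (this is the direction exploited in Lemma~\ref{dimI}). A lower bound on the subring does not follow. The paper's only mechanism for lower bounds is Theorem~\ref{LowerBoundKdim} via torus actions on an actual smooth manifold, which is unavailable here: the conjecture is a statement about the homotopical tautological ring of a rational homotopy type, and the Atiyah--Hirzebruch theorem (Theorem~\ref{ahat}) makes $\hat{A}(M)=0$ a \emph{necessary} condition for such an action, not a sufficient one. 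Your suggestion of an ``algebraic shadow'' of equivariant localisation is exactly the kind of idea that would be needed, but as stated it is a direction of research, not an argument. In short, the proposal correctly reduces the conjecture to the two subproblems the paper already identifies as open, and resolves neither.
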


\bibliographystyle{alpha}
{\let\clearpage\relax \bibliography{../../Bibliography/central-bib}}

\bigskip
\textit{Email address}: nils.prigge@math.su.se
	
\end{document}